\newcommand{\add}{\mathrm{add}}
\newcommand{\Add}{\mathrm{Add}}
\newcommand{\Prod}{\mathrm{Prod}}
\newcommand{\Gen}{\mathrm{Gen}}
\newcommand{\Cogen}{\mathrm{Cogen}}
\newcommand{\rmod}{\mathrm{Mod-}}
\newcommand{\rfmod}{\mathrm{mod-}}
\newcommand{\lm}{{\mbox{\rm $R$-mod}}}
\newcommand{\LM}{{\mbox{\rm $R$-Mod}}}
\newcommand{\M}{\mbox{\rm Mod-$R$}}
\newcommand{\m}{\mbox{\rm mod-$R$}}
\newcommand{\Z}{\mathbb{Z}}
\newcommand{\N}{\mathbb{N}}
\newcommand{\mapr}[1]{\xrightarrow{#1}}
\newcommand{\exs}[5]{0\rightarrow #1 \mapr{#2} #3 \mapr{#4} #5 \rightarrow 0}
\newcommand{\Ext}[3]{\mbox{Ext}^1_{#1}\,(#2,#3)}
\newcommand{\Exti}[4]{\mbox{Ext}^{#1}_{#2}\,(#3,#4)}
\newcommand{\Tori}[4]{\mbox{Tor}_{#1}^{#2}\,(#3,#4)}
\DeclareMathOperator{\HomOp}{Hom}
\newcommand{\Hom}[3]{\HomOp_{#1}(#2,#3)}
\newcommand{\Acal}{\ensuremath{\mathcal{A}}}
\newcommand{\Pcal}{\ensuremath{\mathcal{P}}}
\newcommand{\Rcal}{\ensuremath{\mathcal{R}}}
\newcommand{\Fcal}{\ensuremath{\mathcal{F}}}
\newcommand{\Gcal}{\ensuremath{\mathcal{G}}}
\newcommand{\Dcal}{\ensuremath{\mathcal{D}}}
\newcommand{\Scal}{\ensuremath{\mathcal{S}}}
\newcommand{\Mcal}{\ensuremath{\mathcal{M}}}
\newcommand{\Lcal}{\ensuremath{\mathcal{L}}}
\newcommand{\Ecal}{\ensuremath{\mathcal{E}}}
\newcommand{\Ccal}{\ensuremath{\mathcal{C}}}
\newcommand{\Qcal}{\ensuremath{\mathcal{Q}}}
\newcommand{\p}{\ensuremath{\mathbf{p}}}
\newcommand{\comp}{\ensuremath{\mathbf{c}}}
\newcommand{\q}{\ensuremath{\mathbf{q}}}
\newcommand{\tube}{\ensuremath{\mathbf{t}}}
\newcommand{\calb}{\ensuremath{\mathcal{B}}}
\newcommand{\ra}{\rightarrow}
\theoremstyle{plain}
\newtheorem{thm}{Theorem}
\newtheorem{prop}[thm]{Proposition}
\newtheorem{lem}[thm]{Lemma}
\newtheorem{lemma}[thm]{Lemma}
\newtheorem{cor}[thm]{Corollary}
\newtheorem{ex}[thm]{Example}
\theoremstyle{definition}
\theoremstyle{remark}
\newtheorem*{rem}{Remark}
\begin{document}
\title{Large tilting modules and representation type}
\author{\textsc{Lidia Angeleri H\" ugel}}
\address{Dipartimento di Informatica e
Comunicazione,\\ Universit\`a degli Studi dell'Insubria\\
Via Mazzini 5, I - 21100 Varese, Italy}
\email{lidia.angeleri@uninsubria.it}
\author{\textsc{Otto Kerner}}
\address{Mathematisches Institut, Heinrich-Heine-Universit\" at D\" usseldorf\\
Universit\" atsstr.1, 40225 D\" usseldorf, Germany}
\email{kerner@math.uni-duesseldorf.de}
\author{\textsc{Jan Trlifaj}}
\address{Charles University, Faculty of Mathematics and Physics, Department of Algebra \\
Sokolovsk\'{a} 83, 186 75 Prague 8, Czech Republic}
\email{trlifaj@karlin.mff.cuni.cz}
\thanks{We acknowledge
support by  Universit\`a di Padova, Progetto di Ateneo
CDPA048343. First author also partially supported  by PRIN 2005 "Prospettive in teoria degli anelli, 
algebre di Hopf e categorie di moduli",  by the DGI and the
European Regional Development Fund, jointly, through Project
 MTM2005--00934, and by the Comissionat per Universitats i Recerca
of the Generalitat de Ca\-ta\-lunya, Project 2005SGR00206. Third author
 acknowledges support by GA\v CR 201/06/0510 and MSM 0021620839.}
\date{\today}

\begin{abstract}
We study finiteness conditions on large tilting modules over arbitrary rings. We then turn to a hereditary artin algebra $R$ and apply our results to the (infinite dimensional) tilting module $L$ that generates all modules without preprojective direct summands. We show that the behaviour of $L$ over its endomorphism ring determines the representation type of $R$. A similar result holds true for the  (infinite dimensional) tilting module $W$ that generates the divisible modules. Finally, we extend to the wild case some results on Baer modules and torsion-free modules proven in \cite{baerml} for tame hereditary algebras.
\end{abstract}

\maketitle

\section*{Introduction}
The category mod-$R$ of all finitely generated modules over a hereditary artin algebra $R$ is well understood. Let us briefly recall its main properties. First, every finitely generated $R$-module has an essentially unique indecomposable decomposition. Further, the finitely generated indecomposable modules are depicted in the Auslander-Reiten quiver of $R$. If $R$ is indecomposable and has infinite representation type, 
this quiver has the following shape

\begin{center}
\setlength{\unitlength}{1mm}
\begin{picture}(100,15)
\put(20,5){\oval(50,10)[l]}
\put(21,-0.3){\ldots}
\put(21,9.7){\ldots}
\put(70,5){\oval(50,10)[r]}
\put(63,-0.3){\ldots}
\put(63,9.7){\ldots}
\put(31,0){\framebox(30,10)}
\put(10,-5){\p}
\put(45,-5){\tube}
\put(80,-5){\q}
\end{picture}
\vspace{.9 cm}
\end{center}
where
$\p$ contains all indecomposable projectives and is called the preprojective component, 
$\q$ contains all indecomposable injectives and is called the preinjective component, and
 $\tube$  consists of infinitely many infinite components, called regular components.

\medskip

Much less is known about the category Mod-$R$ of all $R$-modules, if $R$ is representation infinite. 
In his seminal paper \cite{R} from 1979, Ringel initiated the study of the infinite dimensional modules by investigating some torsion pairs in Mod-$R$ constructed from the Auslander-Reiten components of $R$.

For example, he considered the torsion pair $(\Rcal, \Dcal)$  cogenerated by $\tube$. It provides a cut of Mod-$R$ into a torsion-free class $\Rcal$ containing $\p$ and $\tube$, and a torsion class $\Dcal$ containing $\q$, and in some sense, it is maximal with respect to this property, see \cite{RR} and \ref{extremal}. When $R$ is of tame representation type, the torsion pair $(\Rcal, \Dcal)$ splits, and in view of the striking analogies with the category of abelian groups, the modules in $\Dcal$ are called divisible, while the modules in $\Rcal$ are called reduced. 

Ringel also considered the torsion pair $(\Pcal, \Lcal)$ generated by $\p$. Here the torsion-free class $\Pcal$ contains $\p$, the torsion class $\Lcal$ contains $\tube$ and $\q$, and again, the torsion pair is maximal with respect to this property in the sense of \ref{extremal}. However,  $(\Pcal, \Lcal)$ is not a split torsion pair unless $R$ is of finite representation type. 

Finally, there are also the dual constructions: the torsion pair $(\Fcal, \Gen\tube)$ generated by $\tube$, and the torsion pair $(\Ccal, \Qcal)$ generated by $\q$, see \cite{R}, or \ref{torsionfree} and  \ref{preinjective}.

\medskip

The aim of our paper is to study these torsion pairs from the point of view of infinite dimensional tilting theory. 
Indeed, there are tilting modules $W$ and $L$ such that $\Dcal=\Gen W$ and $\Lcal=\Gen L$. If $R$ is tame, then it is shown in \cite{RR} that $W$
can be chosen as the direct sum of a set of representatives of the Pr\"ufer modules and the generic module $G$.
A construction of $W$ in the wild case,  as well as a construction of $L$ in case $R$ has infinite representation type, can be found in the works of Lukas \cite{L1, L2}; for more details we refer to the paper \cite{KT}.

\medskip
It turns out that $W$ and $L$ play a remarkable role both in the tame and in the wild case.  Indeed, they control the behaviour of the category Mod-$R$: one can read off the representation type of $R$ from finiteness conditions satisfied by $W$ or by $L$. For example, $R$ is of tame representation type if and only if $L$ is noetherian when viewed as a module over its endomorphism ring End$L$. Moreover, if $L$ has finite length over End$L$, then $R$ has finite representation type (Theorems \ref{tame} and \ref{frt}).

\medskip
These results are applications of more general investigations carried out in the first part of the paper. We consider arbitrary tilting modules over an arbitrary ring $R$. As explained in Section 1, using results from  \cite{AHT, bazher, bast}, every tilting class $T^\perp$ in Mod-$R$ corresponds  
bijectively to a resolving subcategory $\Scal$ of mod-$R$, and also to a cotilting class ${}^\perp C$ in the category of left $R$-modules $R$-Mod. This allows us to associate to $T$ cotorsion pairs in Mod-$R$ and $R$-Mod. In  Section 2, we characterize finiteness conditions on $T$ in terms of these cotorsion pairs and of the resolving subcategory $\Scal$.

In Section 3, we restrict to the case where $T$ has projective dimension one. Then $T$ gives rise to a torsion pair in Mod-$R$ with torsion class $\Gen T$. If $R$ is a hereditary artin algebra, and $\Scal$ is a union of Auslander-Reiten-components, also the torsion pair $(\Fcal, \Gcal)$ in Mod-$R$ with torsion-free class $\Fcal=\varinjlim \Scal$ is of importance. We prove that $T$ is product-complete if and only if $(\Fcal, \Gcal)$ is a split torsion pair (Corollary \ref{splittp}).  

In fact, the latter result is a consequence of our investigations in Section 4 devoted to the class $\calb$ of all Baer modules for the torsion class $\Gcal$.
Recall that $\calb$ is the class of all modules $M$ such that  $\Ext RMG = 0$ for all $G \in \Gcal$. In Theorem \ref{filtbaer} we show that a module belongs to $\calb$ if and only if it is $\Scal$-filtered, generalizing a result from \cite{baerml}. 

Finally, in Section 5, we apply our results to the case where $R$ is a hereditary artin algebra, and $\Scal=\add \p$. This enables us to prove in Section 6 that the tilting modules $L$ and $W$ determine the representation type of $R$. Moreover, 
we give an alternative proof of Ringel's result  \cite[3.7 - 3.9]{R} stating that $R$ is tame if and only if the torsion pair $(\Ccal, \Qcal)$ splits.
We also extend to the wild case some results on Baer modules and torsion-free modules obtained in \cite{baerml}. 

\bigskip

\section{Preliminaries} 

\subsection{Notation.} Let $R$ be a ring, and let \M\ and \LM\ be the categories of all right and left $R$-modules, respectively. We denote by \m\ the subcategory of all modules possessing a projective resolution consisting of finitely generated modules, and we define {\lm} correspondingly.

\medskip
For a right $R$-module $M$, we denote by $M^\ast=\Hom{\Z}{M}{{\mathbb{Q}/\Z}}$ its character (left $R$-) module. 
Instead of the character module we can equivalently consider other dual modules, for example, 
for modules over an artin algebra, we can take $M^\ast=D(M)$ where $D$ denotes the standard duality. 
If $\Scal$ is a class of modules, we denote by $\Scal^\ast$ the corresponding class
of all duals $B^\ast$ of the modules $B\in\Scal$.

\medskip
For a class of modules $\mathcal C$, we denote $\Ccal^{<\omega}=\Ccal\cap\m.$ Moreover, we define  
$$^o \mathcal C = \{ M \in \rmod R \mid \Hom RMC = 0 \mbox{ for all } C \in
\mathcal C \},$$
$$^{\perp_1} \mathcal C = \{ M \in \rmod R \mid \Ext RMC = 0 \mbox{ for all } C \in
\mathcal C \}$$
and
$${^\perp} \mathcal C = \{ M \in \rmod R \mid \Exti iRMC = 0 \mbox{ for all } C \in
\mathcal C \mbox{ and all } i > 0 \}$$
$$\mathcal C ^\intercal = \{ M \in \LM \mid \Tori iRCM = 0 \mbox{ for all } C \in
\mathcal C \mbox{ and all } i > 0 \}.$$
Similarly, the classes $\mathcal C ^o$, $\mathcal C ^{\perp_1}$, $\mathcal C ^\perp$, and $^\intercal \mathcal C$ are
defined.

We denote by  {Add}\,{$\mathcal C $} (respectively,  {add}\,{$\mathcal C $}) the class consisting of all modules isomorphic to direct summands of (finite) direct sums of modules of ${\mathcal C } $. The class  consisting of   all modules isomorphic to direct  summands of direct products of modules of ${\mathcal C } $ is denoted by {Prod}\,{$\mathcal C $}.  Finally, Gen{$\mathcal C $} and Cogen{$\mathcal C $} denote the class of all modules generated and cogenerated, respectively, by the modules in ${\mathcal C } $.

We will say that a module $M_R$ with the endomorphism ring $S$ is {\em endonoetherian} if $M$ is noetherian when viewed as a left $S$-module.
If $_SM$ has finite length then $M$ is called {\em endofinite}.
Finally, following \cite{KSa}, a module $M$ with Add$M$ closed under direct products will be called
\emph{product-complete}. Note that $M$ is product-complete iff $\text{Add}M=\text{Prod}M$.
Moreover, every product-complete module is $\Sigma$-pure-injective.

\medskip
\subsection{Tilting and cotilting cotorsion pairs.}
A module $T$ is said to be  a ($n$-) {\em tilting module} if it 
 satisfies
 
  (T1) proj.dim$(T) \leq n$; 
  
  (T2) $\Exti iRT{T^{(I)}} = 0$ for each set $I$ and each $i > 0$; and 
  
  (T3) there are $r\in\N$ and an    exact sequence 
$0 \to R \to T_0 \to T_1 \to\ldots \to T_r\to 0$ where $T_i \in \mbox{Add}(T)$ for all $i \leq r$.

The class $T^\perp$ is then called the {\em tilting class} induced by $T$.

\medskip 

Note that $T$ is a $1$-tilting module if and only if the class  $T^\perp$  coincides with Gen$T$. One then has a {\it tilting torsion pair}
$(T^o, \text{Gen}T)$  \footnote{We adopt the convention of writing the torsion-free class on the left side of the torsion pair.}.
The inclusion $T^\perp \subseteq \text{Gen}T$ holds true for any $n$-tilting module $T$, see \cite[2.3]{AC1}.
\medskip

{\it Cotilting modules} and {\em classes} are defined dually and have the dual properties.

\medskip

 Tilting and cotilting classes arise naturally in cotorsion pairs. 
A {\em cotorsion pair} is a pair of classes of modules $(\mathcal A,\mathcal B)$ such that
$\mathcal A = {}^{\perp_1} \mathcal B$ and $\mathcal B = \mathcal A ^{\perp_1}$.
If $\Scal$ is a class of right $R$-modules, we obtain    a cotorsion pair $(\mathcal A,\mathcal B)$ by setting
$\mathcal B = \Scal ^{\perp_1}$
and  $\mathcal A = {}^{\perp_1 }(\Scal ^{\perp_1 })$. It is called the cotorsion pair 
{\em generated}\footnote{Our terminology follows \cite{GT}, hence it differs from  previous use.}  by $\Scal$.
Dually, if $\Scal$ is a class of right $R$-modules, we obtain    a cotorsion pair $(\mathcal A,\mathcal B)$ by setting
$\mathcal A= {}^{\perp_1 }\Scal $ and $\mathcal B = (^{\perp_1 }\Scal) ^{\perp_1}$. It is called the cotorsion pair {\em cogenerated} by $\Scal$.

A cotorsion pair $(\mathcal A,\mathcal B)$ is said to be \emph{complete} if
for every module $X$ there are short exact sequences $0\to X\to B\to
A\to 0$ and $0\to B'\to A'\to X \to 0$ where $A,A'\in\mathcal A$ and
$B,B'\in\mathcal B$.
 Cotorsion pairs generated by a set of modules or cogenerated by a class of pure-injective modules are always complete \cite[3.2.1 and 3.2.9]{GT}.

\medskip

Cotorsion pairs  $(\mathcal A,\mathcal B)$ with $\mathcal B=T^\perp$ for some   $n$-tilting module $T$ are called $n$-{\em tilting cotorsion pairs}, and cotorsion pairs  $(\mathcal A,\mathcal B)$ with $\mathcal A={}^\perp C$ for some   $n$-cotilting module $C$ are called $n$-{\em cotilting cotorsion pairs}. We are now going to describe them as cotorsion pairs generated, respectively cogenerated, by certain classes of modules.

\medskip

Recall that a subcategory $\Scal$ of \m\ is said to be {\it resolving}, if it is closed under direct summands,  extensions,  kernels of epimorphisms, and contains $R$. If $\Scal$ is resolving, then $\Scal^\perp = \Scal^{\perp_1}$, and ${}^{\perp_1}(\Scal^\perp )=
{}^{\perp}(\Scal^\perp )$, see \cite[2.2.11]{GT}.

Dually,  we denote by ${\mathcal P \mathcal I}$ the full subcategory of $\M$ consisting of the pure-injective modules, and we say that   a subcategory $\Scal$ of ${\mathcal P \mathcal I}$ is {\it coresolving}  if it is closed under direct summands,  extensions, cokernels of monomorphisms, and contains all the injective modules. Moreover, for $\Scal$ coresolving, ${}^{\perp_1}\Scal = {}^\perp\Scal$,
and $({}^\perp\Scal )^{\perp_1} = ({}^\perp\Scal )^{\perp}$, see \cite[2.2.11]{GT}.
\medskip

The following   Theorem, relying on work of Bazzoni, Herbera, and \v S\v tov\' \i\v cek, is essential for our investigation.

\begin{thm}\label{resol}
 Let $R$ be a ring, and let $(\mathcal A,\mathcal B)$ be a cotorsion pair. The following statements hold true.
 \begin{enumerate}
\item 
$(\mathcal A,\mathcal B)$ is an $n$-tilting cotorsion pair if and only if it is generated by a resolving subcategory  
 $\Scal$ of \m\ consisting of   modules of projective dimension at most $n$.
 \item 
 $(\mathcal A,\mathcal B)$ is an $n$-cotilting cotorsion pair if and only if it is cogenerated by a coresolving subcategory  
 $\Scal$ of ${\mathcal P \mathcal I}$ 
  consisting of   modules of injective dimension at most $n$ such that ${}^{\perp_1}\Scal$ is   closed under direct products.
\end{enumerate}
In particular,  tilting and cotilting classes are always {\em definable} classes, that is, they are closed under direct products, direct limits, and pure submodules.
\end{thm}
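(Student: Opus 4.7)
The plan is to combine three ingredients: (a) the Bazzoni--Herbera and Bazzoni--\v S\v tov\'\i\v cek finite-type theorem, asserting that every tilting class equals $\Scal_0^\perp$ for a set $\Scal_0\subseteq\m$ of modules of bounded projective dimension; (b) \v S\v tov\'\i\v cek's analogous description of cotilting classes as $^\perp\Scal$ for a suitable coresolving class of pure-injectives; and (c) standard completeness and approximation properties of cotorsion pairs generated by a set. Each equivalence then reduces to passing between a (co)tilting module and a suitable (co)resolving subcategory.

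For the ``only if'' direction of (1), suppose $\mathcal{B}=T^\perp$ for an $n$-tilting $T$. The finite-type theorem provides a set $\Scal_0\subseteq\m$ of modules of projective dimension $\leq n$ with $\Scal_0^\perp=T^\perp$. Let $\Scal$ be the closure of $\Scal_0\cup\{R\}$ in \m\ under direct summands, extensions, and kernels of epimorphisms. A routine long-exact-sequence computation shows that each of these three operations preserves both the bound $n$ on projective dimension and the right-perpendicular class, so $\Scal$ is resolving, lies in \m\ with projective dimensions bounded by $n$, and satisfies $\Scal^{\perp_1}=\Scal^\perp=\mathcal{B}$.

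Conversely, if $(\mathcal{A},\mathcal{B})$ is generated by a resolving $\Scal\subseteq\m$ of bounded projective dimension, a set of representatives for $\Scal$ exhibits the cotorsion pair as generated by a set, hence complete by \cite[3.2.1]{GT}. Iterating the special $\mathcal{B}$-preenvelope of $R$ yields a finite exact sequence $0\to R\to B_0\to B_1\to\cdots\to B_n\to 0$ with each $B_i\in\mathcal{A}\cap\mathcal{B}$; the process terminates at step $n$ because every module in $\mathcal{A}$ has projective dimension $\leq n$. Setting $T=\bigoplus_{i=0}^n B_i$, condition (T1) is immediate, (T3) is the constructed sequence, and (T2) follows from $T\in\mathcal{A}\cap\mathcal{B}$ together with the identity $\mathcal{A}={}^\perp\mathcal{B}$, which holds because $\Scal$ is resolving. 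A standard verification then yields $T^\perp=\mathcal{B}$.

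Part (2) is handled by the dual argument, but leaning on \v S\v tov\'\i\v cek's theorem rather than a formal duality: every cotilting class has the form $^\perp\Scal$ for a coresolving class $\Scal$ of pure-injectives of bounded injective dimension. The hypothesis that $^{\perp_1}\Scal$ be closed under direct products is needed in order to construct the cotilting module, since, unlike in the tilting case, one cannot reduce to a set of finitely presented modules; this is where I expect the main obstacle to lie. Once the product-closure is in place, one inverts the argument of the tilting half, building a coresolution of a sufficiently large injective cogenerator by modules in ${}^{\perp_1}\Scal\cap\Scal$. Finally, the ``in particular'' clause is immediate: for tilting, closure of $T^\perp=\Scal_0^\perp$ under direct products, direct limits, and pure submodules is the standard stability of $\Exti iR S {-}$-vanishing for $S$ finitely presented; for cotilting, the analogous closure of ${}^\perp C={}^\perp\Scal$ follows from the pure-injectivity of the members of $\Scal$, which makes $\Ext R{-}X$-vanishing stable under the same operations.
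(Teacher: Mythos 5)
Your proposal is correct and follows essentially the same route as the paper, which simply cites the finite-type theorem of Bazzoni--Herbera/Bazzoni--\v S\v tov\'\i\v cek for (1), the pure-injectivity of cotilting modules plus completeness of cotorsion pairs cogenerated by pure-injectives for (2), and the standard preenvelope/precover construction of the (co)tilting module; you have merely unpacked those citations. The only cosmetic slip is that in the dual construction the coresolution of the injective cogenerator lives in $\mathcal A\cap\mathcal B$ rather than in ${}^{\perp_1}\Scal\cap\Scal$, which does not affect the argument.
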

\begin{proof}
For (1) see \cite{bazher, bast} or \cite[5.2.23]{GT}.

(2) For the if-part, note that $(\mathcal A,\mathcal B)$ is a complete cotorsion pair
by \cite[3.2.9]{GT}. Further, from the  assumption on $\Scal$ it follows that $\Acal$ is closed under direct products and kernels of epimorphisms \cite[2.2.11]{GT} and  $\mathcal B$ consists of   modules of injective dimension at most $n$, see \cite[2.2]{AC1}. Then  $(\mathcal A,\mathcal B)$ is an $n$-cotilting cotorsion pair by \cite[4.2]{AC1}. 

For the converse implication, we use that every cotilting module is pure-injective, see \cite{B,S}, or  \cite[8.1.7]{GT}. So, if $C$ is a cotilting module such that ${}^\perp C=\mathcal A$, 
and $\Scal$ consists of the pure-injective modules in $\mathcal B$, then $\Scal$ is a coresolving subcategory of ${\mathcal P \mathcal I}$   that contains $C$ and has the stated properties, see \cite[8.1.10]{GT}. 
Hence  $(\mathcal A,\mathcal B)$ is cogenerated by $\Scal$.
\end{proof}

Notice that a resolving subcategory $\Scal$ of \m\  as above is uniquely determined by the tilting class $\mathcal B$. Indeed, we have the following result.

\begin{thm}\cite[2.2 and 2.3]{AHT}\label{oneone}
There is a one-to-one correspondence between the $n$-tilting classes in $\M$ and the $n$-cotilting classes in $\LM$ that are cogenerated by a set $\Scal^\ast$ of dual modules. The correspondence is given by the assignment 
$$\Scal^{\perp} \mapsto {}^{\perp}\Scal^\ast (= \mathcal S ^\intercal)$$ 
where $\Scal$ is a  resolving subcategory  of \m\ as in Theorem \ref{resol}. Moreover, if $T$ is an  $n$-tilting right module, the dual module  $T^\ast$ is an  $n$-cotilting left module inducing the  corresponding cotilting class.
\end{thm}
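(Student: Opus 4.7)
The plan is to reduce everything to the Ext-Tor duality. Deriving the Hom-tensor adjunction $(N\otimes_R M)^\ast\cong\Hom RM{N^\ast}$ for $N$ right and $M$ left $R$-modules gives, in the first argument,
$$\Exti iRM{N^\ast}\cong (\Tori iRNM)^\ast,$$
and since $\Q/\Z$ is an injective cogenerator of $\Ab$, one side vanishes iff the other does. The symmetric adjunction in the second argument yields the companion $\Exti iRN{M^\ast}\cong(\Tori iRNM)^\ast$. When $N\in\m$, resolving $N$ by finitely generated projectives $P_\bullet$ and using $\Hom RPM^\ast\cong P\otimes_R M^\ast$ for $P$ finitely generated projective produces a further identity $(\Exti iRNM)^\ast\cong \Tori iRN{M^\ast}$. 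Combining these, for any $\Scal\subseteq\m$ and $M\in\M$ one obtains the two correspondences $M\in\Scal^\perp\Leftrightarrow M^\ast\in\Scal^\intercal$ and ${}^\perp\Scal^\ast=\Scal^\intercal$ inside $\LM$.

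Starting from an $n$-tilting class $\mathcal B=\Scal^\perp$, with $\Scal\subseteq\m$ the resolving subcategory of modules of projective dimension at most $n$ from Theorem \ref{resol}(1), I set $\Ccal:=\Scal^\intercal={}^\perp\Scal^\ast$ and verify that $\Ccal$ is $n$-cotilting via Theorem \ref{resol}(2). Each $S^\ast$ is pure-injective (character modules always are) and has $\id S^\ast\le n$, since the duality turns $\pd S\le n$ into $\Exti iRM{S^\ast}=0$ for $i>n$. The class $\Scal^\intercal$ is closed under direct products because $\Tori iRS{-}$ commutes with products whenever $S$ is finitely presented. Taking the coresolving closure $\Tcal$ of $\Scal^\ast$ in ${\mathcal P\mathcal I}$ (direct summands, extensions, cokernels of monomorphisms, plus all injectives) then produces a coresolving subcategory of injective dimension $\le n$ with ${}^{\perp_1}\Tcal=\Scal^\intercal$ closed under products; Theorem \ref{resol}(2) concludes.

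For the bijection, the correspondence $M\in\mathcal B\Leftrightarrow M^\ast\in\Ccal$ established in the first paragraph recovers $\mathcal B=\{M\in\M\mid M^\ast\in\Ccal\}$ from $\Ccal$, giving injectivity. Surjectivity onto the cotilting classes cogenerated by a set of duals is immediate: every such class has the form ${}^\perp\Scal^\ast$, and by uniqueness in Theorem \ref{resol}(1) one may replace $\Scal$ by the resolving subcategory of $\m$ of bounded projective dimension associated with the tilting preimage $\Scal^\perp$.

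The delicate part is the \emph{moreover} clause, establishing that $T^\ast$ is itself an $n$-cotilting module with ${}^\perp T^\ast=\Ccal$. Axiom (C1) is immediate from $\pd T\le n$ via the duality; dualizing the (T3) sequence $0\to R\to T_0\to\cdots\to T_r\to 0$ yields a (C3) sequence $0\to T_r^\ast\to\cdots\to T_0^\ast\to R^\ast\to 0$ with $T_i^\ast\in\Prod T^\ast$ and $R^\ast$ an injective cogenerator of $\LM$. Axiom (C2) and the identification ${}^\perp T^\ast=\Ccal$ both reduce to the biconditional chain
$$M\in\Ccal\ \Leftrightarrow\ M^\ast\in\mathcal B\ \Leftrightarrow\ \Exti iRT{M^\ast}=0\ \Leftrightarrow\ \Tori iRTM=0\ \Leftrightarrow\ M\in{}^\perp T^\ast,$$
where the outer equivalences use the duality correspondences, the second uses $\mathcal B=T^\perp$, and the middle one applies $\Exti iRT{M^\ast}\cong(\Tori iRTM)^\ast$. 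Specializing to $M=(T^{(I)})^\ast$, which lies in $\Ccal$ because $T^{(I)}\in\mathcal B$ by (T2), delivers (C2). The main technical obstacle is precisely this bookkeeping: the two symmetric forms of the Ext-Tor duality must fit together coherently, since $T$ itself need not lie in $\m$ and so the finitely presented version of the duality is available only through the parameterizing set $\Scal$; once the chain above is assembled, the moreover clause follows without further work.
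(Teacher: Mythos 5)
The paper offers no proof of this statement---it is quoted from \cite[2.2 and 2.3]{AHT}---so your argument must stand on its own; for the most part it does, and it follows the standard route. The three Ext--Tor formulas are correct, and you deploy them carefully: in the chain $M\in\Ccal\Leftrightarrow M^\ast\in\mathcal B\Leftrightarrow\Tori iRTM=0\Leftrightarrow M\in{}^\perp T^\ast$ the links involving the (typically infinitely generated) module $T$ use only the form $\Exti iRN{M^\ast}\cong(\Tori iRNM)^\ast$, valid for arbitrary $N$, while the finitely presented form is applied only to $S\in\Scal\subseteq\m$. This correctly yields (C1), (C2), (C3) for $T^\ast$ and the identification ${}^\perp T^\ast=\Scal^\intercal={}^\perp\Scal^\ast$, so the ``moreover'' clause is fine---and it already shows that $\Scal^\intercal$ is an $n$-cotilting class, making your second paragraph superfluous. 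That is fortunate, because that paragraph is flawed: the ``coresolving closure of $\Scal^\ast$ in ${\mathcal P \mathcal I}$'' need not stay inside ${\mathcal P \mathcal I}$, since a cokernel of a monomorphism between pure-injective modules need not be pure-injective, so Theorem \ref{resol}(2) cannot be applied to it without further justification. Injectivity of the assignment, via $\mathcal B=\{M\mid M^\ast\in\Ccal\}$, is correct (and well-definedness follows dually from $\Scal^\intercal=\{X\mid X^\ast\in\Scal^\perp\}$).

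The one genuine gap is surjectivity. If the target class is read literally---all $n$-cotilting classes of the form ${}^{\perp}\mathcal U^\ast$ for \emph{some} set $\mathcal U$ of right modules---then your reduction is circular: you assume that $\mathcal U^\perp$ is an $n$-tilting class (nothing you have proved gives this for an arbitrary $\mathcal U$), and you assume that replacing $\mathcal U$ by the resolving subcategory $\Tcal\subseteq\m$ with $\Tcal^\perp=\mathcal U^\perp$ preserves the class ${}^\perp(-)^\ast$. The latter does not follow from your formulas: the passage between $\Exti iRUM$ and $\Tori iRU{M^\ast}$ requires $U\in\m$, so equality of Ext-perpendiculars in \M\ does not transfer to equality of ${}^\perp\mathcal U^\ast=\mathcal U^\intercal$ with $\Tcal^\intercal$. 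This converse direction is exactly the substantive content of \cite[2.3]{AHT} and cannot be dismissed as ``immediate''; either restrict the claimed image to classes of the form ${}^\perp\Scal^\ast$ with $\Scal$ resolving in \m\ (in which case surjectivity is tautological), or supply the missing argument.
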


We remark that if $R$ is left artinian, the assignment in Theorem \ref{oneone} even yields a one-to-one correspondence between the $1$-tilting classes in $\M$ and the $1$-cotilting classes in $\LM$; however, this need not be the case for general rings, cf.\ \cite[8.2.8 and 8.2.13]{GT}.

\bigskip

\section{Finiteness conditions on large tilting modules}

Throughout this section, $R$ denotes a ring and $\Scal$ a resolving subcategory of \m\ consisting of modules of projective dimension at most $n$. According to the results discussed in Section 1, the class
$\mathcal S$ gives rise to the following cotorsion pairs:

\subsection{The cotorsion pair $(\Mcal, \Lcal)$ generated by $\Scal$ in \M,} 
 \label{Lcal}
which is a tilting cotorsion pair by Theorem \ref{resol}. We fix an $n$-tilting module $T_R$ such that $\Lcal=T^\perp (= \mathcal S ^\perp)$.

\subsection{The cotorsion pair $(\Ccal, \Ccal^\perp)$ cogenerated by $\Scal^\ast$ in \LM,}   \label{Ccal} 
which is the  cotilting cotorsion pair corresponding to $(\Mcal, \Lcal)$ under the bijection of  Theorem \ref{oneone}. The  dual module  $T^\ast$ is an  $n$-cotilting left module such that $\Ccal={}^\perp (T^\ast) = T^\intercal = \mathcal S ^\intercal$.

If $R$ is left noetherian and $n=1$, then obviously $\Ccal=\varinjlim \Ccal^{<\omega}$.

\subsection{The cotorsion pair $({}^\perp\Dcal,\Dcal)$ generated by $\Ccal^{<\omega}$ in \LM.} 
\label{Dcal}
Of course, we have $ \Ccal^\perp\subseteq \Dcal$, and $\Ccal^{<\omega}$ is a resolving subcategory in $R$-mod.
Moreover, if $\Ccal^{<\omega}$ consists of modules of projective dimension at most $n$, then $({}^\perp\Dcal,\Dcal)$  is a tilting cotorsion pair by Theorem \ref{resol}, and we can fix an $n$-tilting module $_RW$ such that $\Dcal=W^\perp (= (\Ccal^{<\omega})^\perp)$.

\subsection{The cotorsion pair $(\mathcal F, \Ecal)$ cogenerated by $\Ccal^\ast$ in \M.}
\label{limScal}
This is the closure of the cotorsion pair $(\Mcal, \Lcal)$ studied in \cite{at2}.
Indeed, by the well-known  Ext-Tor-relations, we see that $\Ccal^\ast$   coincides with the class of all dual modules in $\Lcal$, cf.\ \cite[9.4]{relml}. 
 So $(\mathcal F, \Ecal)$ is cogenerated by the class of all dual modules in $\Lcal$, and 
 $\mathcal F =  \varinjlim\Mcal= \varinjlim\Scal = {}^\intercal (\mathcal S ^\intercal) = {}^\intercal \Ccal$ by \cite[2.1 and 2.3]{at2}.

Assume now that $\Ccal=\varinjlim \Ccal^{<\omega}$ and that $\Ccal^{<\omega}$ consists of modules of projective dimension at most $n$. Then 
$\mathcal F = {}^\intercal (\Ccal^{<\omega}) = {}^{\perp}((\Ccal^{<\omega})^\ast)$. Hence $(\mathcal F, \Ecal)$ is the cotilting   cotorsion pair corresponding to $({}^\perp\Dcal, \Dcal)$ under the bijection of  Theorem \ref{oneone}, and the  dual module  $W^\ast$ is an  $n$-cotilting right module such that $\mathcal F={}^\perp (W^\ast) = {}^\intercal W$.
In particular, note that in this case $\mathcal F$ is closed under direct products,  so \cite[4.2]{CB} implies that  $\Scal$ is covariantly finite in \m. 

\medskip

We now collect some characterizations of the case when $\Mcal$ is closed under direct limits.

\begin{thm}\label{resume}
 Assume  that  $\Ccal=\varinjlim \Ccal^{<\omega}$, and that $\Ccal^{<\omega}$ consists of modules of projective dimension at most $n$. The following statements are equivalent.
 \begin{enumerate}
 \item $\Mcal$ is closed under direct limits, that is, it coincides with  $\Fcal$.
\item $\Mcal$ is closed under direct products.
\item The cotorsion pair $(\Fcal, \Ecal)$ is generated by some subcategory of \m.
 \item $T$ is product-complete.
\end{enumerate}
If $R$ is right  noetherian, or $\Fcal$ consists of modules of bounded projective dimension, then (1) - (4) are further equivalent to
 \begin{enumerate}
\item[(5)] $W$ is endonoetherian.
\end{enumerate}
 \end{thm}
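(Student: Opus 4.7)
My plan is to establish (1)--(4) through the cycle (1) $\Rightarrow$ (2) $\Rightarrow$ (4) $\Rightarrow$ (1) together with the parallel equivalence (1) $\Leftrightarrow$ (3), and then to derive (5) under the extra hypotheses by invoking Theorem \ref{oneone}. The starting observation, valid without any assumption, is that $\Mcal\subseteq\Fcal$: every module is trivially a direct limit of itself, so $\Mcal\subseteq\varinjlim\Mcal=\Fcal$ by \ref{limScal}; taking first perpendiculars gives $\Ecal\subseteq\Lcal$, so the two cotorsion pairs are directly comparable.

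The easy implications go quickly. First, (1) $\Rightarrow$ (2) is immediate from \ref{limScal}, where $\Fcal$ is noted to be closed under direct products as the left class of a cotilting cotorsion pair. For (1) $\Leftrightarrow$ (3): if $\Mcal=\Fcal$, then $(\Fcal,\Ecal)=(\Mcal,\Lcal)$ is generated by $\Scal\subseteq\m$. Conversely, assume $(\Fcal,\Ecal)$ is generated by some $\Scal_0\subseteq\m$; after replacing $\Scal_0$ by its resolving closure (still contained in $\m$ and of projective dimension at most $n$), we have $\Scal_0\subseteq\Fcal\cap\m$. But every finitely presented module appearing in a direct limit of modules from the summand-closed class $\Scal$ is a direct summand of some term of the system and therefore lies in $\Scal$; hence $\Scal_0\subseteq\Scal$, giving $\Ecal=\Scal_0^{\perp_1}\supseteq\Scal^{\perp_1}=\Lcal$. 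Combined with $\Ecal\subseteq\Lcal$ this yields (1).

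For (2) $\Rightarrow$ (4), closure of $\Mcal$ under direct products forces $T^I\in\Mcal$ for every set $I$, since $T\in\Add T\subseteq\Mcal$. Moreover, (T2) and viewing $T$ as a summand of $T^{(I)}$ yield $\Exti iR{T}{T}=0$ for $i>0$, so $\Exti iR{T}{T^I}\cong\prod_I\Exti iR{T}{T}=0$ and $T^I\in\Lcal=T^\perp$. Thus $T^I\in\Mcal\cap\Lcal$; invoking the standard identification $\Mcal\cap\Lcal=\Add T$ valid for every tilting cotorsion pair, I conclude $\Prod T\subseteq\Add T$, i.e.\ $T$ is product-complete.

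The main obstacle is (4) $\Rightarrow$ (1). Product-completeness of $T$ makes $\Add T$ definable (closed under direct products, direct limits, and pure submodules), and in particular $T$ is $\Sigma$-pure-injective, so every $T^{(J)}$ is pure-injective. Given $F\in\Fcal$, I would present $F$ as a pure quotient $\bigoplus_\alpha S_\alpha\twoheadrightarrow F$ with $S_\alpha\in\Scal$ and then use the finite $\Add T$-coresolution of $R$ from (T3), together with the pure-injectivity of each $T^{(J)}$, to show that $\Exti iR{F}{L}=0$ for all $L\in\Lcal$ and all $i>0$, placing $F$ in $\Mcal$ and forcing $\Fcal\subseteq\Mcal$. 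This step is the central technical content, as it crucially combines the two cotorsion pairs $(\Mcal,\Lcal)$ and $(\Fcal,\Ecal)$ with the pure-injectivity afforded by product-completeness. Finally, under the additional hypothesis, (3) $\Leftrightarrow$ (5) is obtained via Theorem \ref{oneone}: the tilting--cotilting duality applied to $W$ and its dual $W^\ast$ translates endonoetherianity of $W$ into $(\Fcal,\Ecal)$ being generated by a resolving subcategory of $\m$ consisting of modules of projective dimension at most $n$, which is precisely (3); the extra hypothesis (right noetherianity of $R$, or bounded projective dimension in $\Fcal$) ensures that the projective-dimension bound required by Theorem \ref{resol} is automatic and the correspondence applies.
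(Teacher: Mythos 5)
Your easy implications are in order: $\Mcal\subseteq\Fcal$ and hence $\Ecal\subseteq\Lcal$; (1)$\Rightarrow$(2) via \ref{limScal}; (1)$\Leftrightarrow$(3) via $\Fcal\cap\m=\Scal$ (Lenzing's lemma — note you do not even need the resolving closure of $\Scal_0$, and your parenthetical claim that this closure has projective dimension at most $n$ is unjustified but also unnecessary); and (2)$\Rightarrow$(4) via $\Prod T\subseteq\Mcal\cap\Lcal=\Add T$. The problem is that (4)$\Rightarrow$(1) is the entire content of the theorem, and what you offer there is a sketch that does not close. Your pure presentation $0\to K\to\bigoplus_\alpha S_\alpha\to F\to 0$ together with $\Sigma$-pure-injectivity of $T$ does yield $\Exti iRFP=0$ for every \emph{pure-injective} $P\in\Lcal$ (in particular for every module in $\Add T$), but to reach an arbitrary $L\in\Lcal$ you must dimension-shift along an $\Add T$-presentation $0\to L_1\to T_0\to L\to 0$ with $L_1\in\Lcal$, which only gives $\Exti iRFL\cong\Exti{i+k}RF{L_k}$ for all $k$; this terminates only if $F$ has finite projective dimension, and that is precisely what is \emph{not} available for $F\in\Fcal=\varinjlim\Scal$ (the theorem itself lists ``$\Fcal$ consists of modules of bounded projective dimension'' as an \emph{extra} hypothesis needed only for (5)). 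The paper does not attempt such a computation: it quotes \cite[2.3 and 3.1]{tel}, where the hard direction (closure under products, resp.\ product-completeness of $T$, implies closure under direct limits) ultimately rests on the pure-injectivity of cotilting modules and definability of the resulting classes, not on a direct Ext argument. So the central implication remains unproved in your write-up.

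The derivation of (5) is also off target. Theorem \ref{oneone} is a bijection between tilting and cotilting \emph{classes}; it says nothing about endonoetherianity, so it cannot by itself ``translate endonoetherianity of $W$ into $(\Fcal,\Ecal)$ being generated by a subcategory of $\m$.'' The paper's actual chain is: under the extra hypotheses, (3) is equivalent to $\Ecal$ being closed under direct sums (\cite[4.10]{tel}, \cite[5.1.16]{GT}), which is equivalent to $W^\ast$ being $\Sigma$-pure-injective (\cite[3.2]{tel}), which is equivalent to $W$ being endonoetherian by \cite[9.9]{relml}. That last equivalence is the essential input you are missing; without it, or a substitute argument, the step (3)$\Leftrightarrow$(5) is not established.
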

 \begin{proof}
 Of course, (1) implies (2) since $\Mcal$ is then a cotilting class by \ref{limScal}.
So,  the equivalence of the first four conditions follows immediately from \cite[2.3 and 3.1]{tel}. Moreover, 
under the additional assumptions, 
 condition (3) means that the class $\Ecal$ is closed under direct sums, 
see  \cite[4.10]{tel}, or \cite[5.1.16]{GT}. By \cite[3.2]{tel}, the latter 
  is equivalent to $W^\ast$ being $\Sigma$-pure-injective. Now we use \cite[9.9]{relml}, where it is shown that a tilting module is endonoetherian if and only if its dual module is $\Sigma$-pure-injective, and we obtain the equivalence of (3) and (5).
 \end{proof}
 
Symmetrically, we obtain

\begin{thm}\label{resume'}
 Assume that $\Ccal=\varinjlim \Ccal^{<\omega}$, and that $\Ccal^{<\omega}$ consists of modules of projective dimension at most $n$.
The following statements are equivalent:
 \begin{enumerate}
\item ${}^\perp\Dcal$ is closed under direct limits, that is, it coincides with $\Ccal$.
\item ${}^\perp\Dcal$ is closed under direct products.
\item The cotorsion pair $(\Ccal,\mathcal C^\perp)$ is generated by
 some subcategory of \lm.
 \item $W$ is product-complete.
\end{enumerate}
 If $R$ is left  noetherian, or $\Ccal$ consists of modules of bounded projective dimension, then (1) - (5) are further equivalent to
 \begin{enumerate}
\item[(5)] $T$ is endonoetherian.
\end{enumerate}
 \end{thm}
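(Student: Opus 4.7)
The plan is to reduce everything to Theorem \ref{resume} via the evident symmetry of the setup. The standing hypotheses $\Ccal=\varinjlim\Ccal^{<\omega}$ and $\Ccal^{<\omega}$ of projective dimension at most $n$ make $({}^\perp\Dcal,\Dcal)$ into a tilting cotorsion pair in $\LM$ with tilting module ${}_RW$ (by \ref{Dcal}), and $(\Ccal,\Ccal^\perp)$ into its cotilting closure, whose cotilting module is $T^\ast$ (by \ref{Ccal} and \ref{limScal}). This is exactly the mirror of the configuration used to prove Theorem \ref{resume}, with $\M$ and $\LM$ swapped and with the roles of $T$ and $W$ exchanged. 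My aim is to run the same argument in this dual setting.

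Concretely, first I would record the inclusions $\Ccal^{<\omega}\subseteq{}^\perp\Dcal\subseteq\Ccal$. The first holds by the definition of generation, and the second follows because $\Ccal^{<\omega}\subseteq\Ccal$ forces $\Ccal^\perp\subseteq(\Ccal^{<\omega})^\perp=\Dcal$, whence ${}^\perp\Dcal\subseteq{}^\perp(\Ccal^\perp)=\Ccal$ (using that $\Ccal^{<\omega}$ is resolving to identify ${}^\perp$ with ${}^{\perp_1}$). The hypothesis $\Ccal=\varinjlim\Ccal^{<\omega}$ then makes (1) equivalent to the equality ${}^\perp\Dcal=\Ccal$, and from this (1)$\Rightarrow$(2) is immediate because a cotilting class is closed under direct products. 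The full chain (1)$\Leftrightarrow$(2)$\Leftrightarrow$(3)$\Leftrightarrow$(4) is now obtained by quoting \cite[2.3 and 3.1]{tel} applied to the tilting pair $({}^\perp\Dcal,\Dcal)$, exactly as in Theorem \ref{resume}.

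For the additional equivalence with (5), under the hypothesis that $R$ is left noetherian or $\Ccal$ consists of modules of bounded projective dimension, the plan is to combine three results already used in the proof of Theorem \ref{resume}. First, \cite[4.10]{tel} (or \cite[5.1.16]{GT}) reformulates (3) as the closure of $\Ccal^\perp$ under direct sums. Second, \cite[3.2]{tel}, applied to the cotilting module $T^\ast$ inducing $(\Ccal,\Ccal^\perp)$, converts this closure into the $\Sigma$-pure-injectivity of $T^\ast$. Third, \cite[9.9]{relml} identifies the latter property with $T$ being endonoetherian. The main obstacle is purely bookkeeping: one must verify that the ``right noetherian / bounded projective dimension in $\Fcal$'' hypotheses invoked in the proof of Theorem \ref{resume} become precisely the ``left noetherian / bounded projective dimension in $\Ccal$'' hypotheses stated here, so that the cited results from \cite{tel} and \cite{relml} apply without adjustment.
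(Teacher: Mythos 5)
Your proposal is correct and follows essentially the same route as the paper: the paper's proof likewise reduces to Theorem \ref{resume} by observing that $\Ccal^{<\omega}$ is a resolving subcategory of $\lm$ generating $({}^\perp\Dcal,\Dcal)$, that $\Ccal=\varinjlim\Ccal^{<\omega}=\varinjlim{}^\perp\Dcal$, and that the roles of $T$ and $W$ (and of $\Fcal$ and $\Ccal$, whence the left-noetherian/bounded-projective-dimension hypotheses) are switched. Your unwinding of the citations from \cite{tel} and \cite{relml} in the dual setting matches what that reduction amounts to.
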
 
 \begin{proof}
Consider the cotorsion pair  $({}^\perp \mathcal D,\mathcal D)$  generated by the resolving subcategory $\Ccal^{<\omega}$ of \lm\ consisting of modules of projective dimension at most $n$. Note that $\Ccal=\varinjlim\Ccal^{<\omega}=\varinjlim{}^\perp\Dcal$ by \cite[2.3]{at2}.
Moreover, recall that $\Fcal=\varinjlim\Scal$ is the cotilting class corresponding to $\Dcal$, and $\Fcal^{<\omega}=\Scal$. So,
  we can   apply Theorem \ref{resume}, keeping in mind that   the roles of $T$ and $W$ are now switched.
\end{proof}
 
\begin{cor}\label{endofin}
Let $R$  be a noetherian ring (or a right artinian ring, or a right  noetherian ring  of finite global dimension). Assume  that $\Ccal=\varinjlim \Ccal^{<\omega}$,  and that $\Ccal^{<\omega}$ consists of modules of projective dimension at most $n$. Then the following statements are equivalent.
\begin{enumerate}
\item $T$ is endofinite 
\item $\Mcal$ coincides with   $\Fcal$, and ${}^\perp\Dcal$ coincides with $\Ccal$.
\item $W$ is endofinite.
\end{enumerate} 
 \end{cor}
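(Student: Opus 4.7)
The plan is to reformulate condition (2) via Theorems \ref{resume} and \ref{resume'} as a symmetric assertion about $T$ and $W$, and then to combine the paper's remark that product-complete modules are $\Sigma$-pure-injective with the classical fact that a $\Sigma$-pure-injective endonoetherian module is endofinite.

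Under the hypotheses on $R$, Theorems \ref{resume} and \ref{resume'} both yield the full equivalence of statements (1), (4), and (5). Hence $\Mcal=\Fcal$ is equivalent both to ``$T$ is product-complete'' and to ``$W$ is endonoetherian'', while ${}^\perp\Dcal=\Ccal$ is equivalent to ``$W$ is product-complete'' and to ``$T$ is endonoetherian''. Thus (2) is nothing but the statement that each of $T$ and $W$ is endonoetherian (equivalently, product-complete). The implications (1)$\Rightarrow$(2) and (3)$\Rightarrow$(2) then follow from two standard facts about endofinite modules: endofinite trivially implies endonoetherian, and by a theorem of Crawley-Boevey an endofinite module is always product-complete.

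For the converse (2)$\Rightarrow$(1), with (2)$\Rightarrow$(3) handled symmetrically, note that under (2) the module $T$ is both product-complete and endonoetherian. By the remark in Section 1 following the definition of product-complete, $T$ is therefore $\Sigma$-pure-injective. Combined with the endonoetherian property, the classical characterization of endofinite modules via pp-definable subgroups then forces $T$ to be endofinite: the pp-definable subgroups of $T$ are $\End T$-submodules, so $\Sigma$-pure-injectivity supplies DCC while endonoetherianness supplies ACC, and the resulting finite length of this lattice is known to imply finite length of $_{\End T}T$.

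The main obstacle is this last step, namely Crawley-Boevey's theorem that a $\Sigma$-pure-injective endonoetherian module is endofinite; this lies outside the apparatus developed in the paper but is standard in the theory of pure-injective modules.
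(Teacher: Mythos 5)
Your proposal is correct and follows essentially the same route as the paper: the paper's proof likewise combines Theorems \ref{resume} and \ref{resume'} and then invokes the fact that a module is endofinite if and only if it is endonoetherian and product-complete, citing Crawley-Boevey for exactly the characterization you sketch via pp-definable subgroups. The step you flag as lying ``outside the apparatus of the paper'' is precisely the external fact the paper cites, so there is no gap.
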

 \begin{proof}
Combine Theorems \ref{resume} and \ref{resume'}, and use that a module is endofinite if and only if it is endonoetherian and product complete (see e.g.\ \cite[p.43]{C2}). 
 \end{proof}
 
 \begin{cor}\label{contra}
 Assume that $R$ is an Artin algebra, $\Ccal=\varinjlim \Ccal^{<\omega}$, and $\Ccal^{<\omega}$ consists of modules of projective dimension at most $n$. Then the following statements are equivalent.
 \begin{enumerate}
\item $\Scal$ is contravariantly finite in \m.
\item $T$ can be chosen finitely generated.
\item $W$ can be chosen finitely generated.
\item $\Ccal^{<\omega}$ is contravariantly finite in \lm.
\end{enumerate}
In particular, in this case, we have cotorsion pairs     $(\mathcal F,\Lcal)$ in \M, and $(\Ccal,\Dcal)$ in \LM, where $\Lcal=\varinjlim\Lcal^{<\omega}$ and $\Dcal=\varinjlim\Dcal^{<\omega}$. 
\end{cor}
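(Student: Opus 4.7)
The plan is to prove the four-way equivalence by establishing the two ``local'' equivalences (1)$\iff$(2) and (3)$\iff$(4), which live on opposite sides, and then bridging them using the standard duality $D$ of the Artin algebra $R$. Both local equivalences are instances of the Auslander--Reiten--Smal\o\ correspondence: over an Artin algebra, a resolving subcategory of bounded projective dimension is contravariantly finite if and only if the associated tilting cotorsion pair is generated by a finitely generated tilting module. For the ``if'' direction, when the tilting module $T$ is finitely generated, $\mathrm{add}\,T$ is contravariantly finite in \m, and $\Scal={}^\perp(T^\perp)\cap\m$ is obtained from $\mathrm{add}\,T$ by finitely iterated kernel-of-epimorphism closures, which preserves contravariant finiteness under our bounded projective dimension hypothesis. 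For the ``only if'' direction, one invokes Auslander--Reiten theory to show that a contravariantly finite resolving subcategory $\Scal$ has only finitely many indecomposable Ext-projective objects, and their direct sum is a classical tilting module generating the class. The analogous argument in \lm, applied to the resolving subcategory $\Ccal^{<\omega}$ (which, by the standing hypotheses, has bounded projective dimension and generates the cotorsion pair $({}^\perp\Dcal,\Dcal)$), yields (3)$\iff$(4).

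For the bridge, I would use the duality $D$ together with Theorem~\ref{oneone}. If $T$ is finitely generated, then $T^\ast=DT$ is a finitely generated cotilting left module inducing the cotilting class $\Ccal$. Over an Artin algebra, classical cotilting modules are functorially finite, and the resolving subcategory ${}^\perp(DT)\cap\lm$ is contravariantly finite in \lm. Using the hypothesis $\Ccal=\varinjlim\Ccal^{<\omega}$ and Theorem~\ref{oneone}, one identifies $\Ccal^{<\omega}$ with ${}^\perp(DT)\cap\lm$, yielding (4). Combining with the local equivalence (3)$\iff$(4) gives (2)$\Rightarrow$(3); the reverse implication is symmetric, passing through the dual $DW$.

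The main obstacle is this bridge step: one must carefully identify $\Ccal^{<\omega}$ with the class ${}^\perp(DT)\cap\lm$ arising from the classical cotilting $DT$, which is exactly where the assumption $\Ccal=\varinjlim\Ccal^{<\omega}$ is used. Once all four conditions are equivalent, the ``in particular'' statement follows quickly: with $T$ and $W$ both finitely generated, $DW$ is a finitely generated cotilting module in \m\ with cotilting class $\Fcal$, so $(\Fcal,\Lcal)$ is a cotilting cotorsion pair in \M, and symmetrically $(\Ccal,\Dcal)$ is a cotilting cotorsion pair in \LM\ with cotilting module $DT$. The identities $\Lcal=\varinjlim\Lcal^{<\omega}$ and $\Dcal=\varinjlim\Dcal^{<\omega}$ then follow because tilting classes are definable by Theorem~\ref{resol}, and over a Noetherian ring, the tilting class associated with a finitely generated tilting module is the direct limit closure of its finitely presented members.
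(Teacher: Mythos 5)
Your proposal is correct in substance but routes the ``bridge'' between the two sides differently from the paper. Both you and the paper handle (1)$\Leftrightarrow$(2) and (3)$\Leftrightarrow$(4) by the Auslander--Reiten--Smal\o{} type correspondence (this is \cite[4.1]{at1} in the paper's notation). For the bridge, the paper proves (1)$\Rightarrow$(4) by observing that $D(\Scal)$ is a covariantly finite coresolving subcategory of \lm\ and quoting \cite[p.125]{AR}, and it proves (3)$\Rightarrow$(2) by an endofiniteness argument: $W$ finitely generated forces $T$ endofinite (via Corollary \ref{endofin}), and then $\Prod D(T)=\Ccal\cap\Dcal=\Add W$ lets one trap $T$ as a summand of $D^2(T)\in\Add D(W)$ with $D(W)$ finitely generated. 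You instead dualize the tilting modules themselves, applying the Auslander--Reiten cotilting correspondence to $DT$ (for (2)$\Rightarrow$(4)) and to $DW$ (for (3)$\Rightarrow$(1)); this is more symmetric and avoids the endofiniteness detour, at the cost of needing the identifications $\Ccal^{<\omega}={}^\perp(DT)\cap\lm$ and $\Fcal^{<\omega}=\Scal={}^\perp(DW)\cap\m$. Note that the first of these is automatic from Theorem \ref{oneone} ($\Ccal={}^\perp(T^\ast)$ holds with no extra hypothesis), so your claim that $\Ccal=\varinjlim\Ccal^{<\omega}$ is used precisely there is misplaced; that hypothesis is really needed to guarantee that $(\Fcal,\Ecal)$ is the cotilting cotorsion pair cogenerated by $W^\ast=DW$, which your (3)$\Rightarrow$(1) step silently relies on.

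One step in the ``in particular'' part is too quick. From ``$DW$ is a finitely generated cotilting module with cotilting class $\Fcal$'' you only get the cotorsion pair $(\Fcal,\Fcal^{\perp_1})=(\Fcal,\Ecal)$; the assertion that $(\Fcal,\Lcal)$ is a cotorsion pair amounts to $\Ecal=\Lcal$, equivalently $\Mcal={}^{\perp_1}\Lcal$ coincides with $\Fcal=\varinjlim\Scal$. This is not formal: it is Krause--Solberg \cite[2.4]{KS} (which the paper cites), or alternatively follows from Theorem \ref{resume} because a finitely generated tilting module over an Artin algebra is endofinite, hence product-complete. Adding that one sentence closes the gap; the rest of your argument, including $\Lcal=\varinjlim\Lcal^{<\omega}$ and $\Dcal=\varinjlim\Dcal^{<\omega}$, is fine.
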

\begin{proof}
We apply some results from  \cite{AR}.
For the equivalence of (1)$\Leftrightarrow$(2) and (3)$\Leftrightarrow$(4), we refer to \cite[4.1]{at1}. Moreover, (1) implies that $D(\Scal)$ is coresolving and covariantly finite, hence (1)$\Rightarrow$(4) holds true by \cite[p.125]{AR}. Now assume that $W$ is finitely generated. Then  $\varinjlim \Ccal^{<\omega}={}^\perp\Dcal$ by  \cite[2.4]{KS}. Moreover,   $W$ and $D(W)$ are  endofinite, and $T$ is endofinite by Corollary \ref{endofin}.  Since $D(T)$ is a cotilting module with ${}^\perp D(T)=\Ccal$, and $W$ is a tilting module with $W^\perp=\Dcal$, we obtain $\Prod D(T) = \Ccal\cap\Dcal=\Add W$, see \cite[2.4]{AC1}. It follows that $D^2(T)\in\Prod D(W)=\Add D(W)$, and since the endofinite module $T$ is a direct summand in $D^2(T)$, and $D(W)$ is finitely generated, we deduce that $T$ is equivalent to a finitely generated tilting module. So we have verified (3)$\Rightarrow$(2).

The last claim is shown in \cite[2.4]{KS}, cf.\cite[5.3]{tel}.
\end{proof}



\bigskip

\section{Torsion pairs and Auslander-Reiten components}

Let us now consider  the case of $n=1$, that is, let $\Scal$ be a resolving subcategory of \m\ consisting of   modules of projective dimension at most one. From the  classes $\Lcal = \Scal^\perp$, $\Ccal = \Scal^\intercal$, $\Dcal = (\Ccal^{<\omega})^\perp$ and 
$\mathcal F = \varinjlim\Scal$ of the previous section  we also obtain some interesting torsion pairs. 

First of all, we have the tilting torsion pair $(T^o, \Lcal)$ in \M\ where $\Lcal=\Gen T = T^\perp$, and the cotilting torsion pair   
 $(\Ccal, {}^o(T^\ast))$ in \LM\ where $\Ccal=\Cogen T^\ast$. 

 Moreover, if  $\Ccal^{<\omega}$ consists of modules of projective dimension at most one, we also have the tilting torsion pair $(W^o, \Dcal)$ in \LM\ where $\Dcal=(\Ccal^{<\omega})^\perp = \Gen W$. 

 Finally, if $R$ is also left noetherian, then $\Ccal=\varinjlim \Ccal^{<\omega}$, and we have the cotilting  torsion pair 
 $(\Fcal,\, {}^o(W^\ast))$ in \M.

 Let us look at the last torsion pair in more detail. 
As we are going to see, if we assume the existence of almost split sequences, and take for $\Scal$ a union of Auslander-Reiten-components, then the torsion class coincides with  $\Gen\Lcal^{<\omega}$.

\medskip

  {\bf Definition.}  \cite{AV} 
Let $R$ be a right artinian ring. 
 A subcategory $\comp$ of \m\ consisting of indecomposable modules   is called an  {\em Auslander-Reiten component in}  \m\ if it satisfies the following conditions.
\begin{enumerate}
\item For any $X\in\comp$ there are a left almost split morphism $X\ra Z$
 and a right almost split morphism $Y\ra X$
in \M\ with $Z,Y\in\m$.
\item If $X\ra Y$ is an irreducible map
 in \m\ 
with one of the modules lying in  $\comp$, then both modules are in  $\comp$.
\item  The Auslander-Reiten-quiver of $\comp$ is connected.
\end{enumerate}

\medskip

The next result and its dual  allow us to replace the Auslander-Reiten-formula \cite{C2} when dealing with artinian rings that need not have a self-duality.

\begin{lemma}\label{3.6}  \cite[3.6]{AV}
Let $0\to A\to B\to C\to 0$ be an almost split sequence in \M, where $R$ is an arbitrary ring, and $X_R$ a module. If $ \mbox{\rm Hom}_R\,(X,A)=0$, then also $\Ext{R}{C}{X}=0$.
The converse holds if $C$ has projective dimension at most one.
\end{lemma}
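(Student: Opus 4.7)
For the forward direction, my plan is to argue by contradiction using the universal property of the almost split sequence $0\to A\xrightarrow{\iota}B\xrightarrow{q}C\to 0$. Assuming $\Hom R X A=0$, I would take an arbitrary extension $\xi\colon\;0\to X\xrightarrow{j}E\xrightarrow{p}C\to 0$ and show that it splits. If it did not, then $p$ would not be a split epimorphism, so the almost split property would produce $h\colon E\to B$ with $qh=p$. I would then observe that the composite $hj\colon X\to B$ satisfies $qhj=pj=0$, hence factors as $\iota g$ for some $g\colon X\to A$. The hypothesis forces $g=0$, so $hj=0$, and $h$ descends to $\bar h\colon C\to B$ with $\bar h p=h$. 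Then $q\bar h p=qh=p$, and since $p$ is epic I would conclude $q\bar h=1_C$, contradicting the non-splitting of the almost split sequence.

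For the converse, I would assume $\Ext R C X=0$ and $\pd C\le 1$, and fix a projective resolution $0\to P_1\xrightarrow{d}P_0\xrightarrow{\pi}C\to 0$. Lifting $\pi$ through $q$ using projectivity of $P_0$ produces $b\colon P_0\to B$ and an induced map $a\colon P_1\to A$ fitting into a chain map from the resolution to the almost split sequence. The class of $a$ in $\Ext R C A\cong \Hom R{P_1}A/d^{\ast}\Hom R{P_0}A$ is non-zero, since its pushout is, up to equivalence, the non-split almost split sequence. Given $f\colon X\to A$, the plan is to exploit the analogous description $\Ext R C X\cong\Hom R{P_1}X/d^{\ast}\Hom R{P_0}X$ and the transfer map $f_{\ast}\colon \Ext R C X\to\Ext R C A$ to show that a non-zero $f$ would produce a non-zero element of $\Ext R C X$, contradicting the hypothesis.

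The main obstacle lies in the converse: in the presence of a self-duality it reduces at once to the classical Auslander--Reiten formula $D\,\Ext R C X\cong\Hom R X{\tau C}$, but obtaining the required comparison directly from the projective resolution without invoking such a duality requires delicate use of the minimality of the chain map $a\colon P_1\to A$ and of the identification of the almost split sequence with the pushout of $0\to P_1\to P_0\to C\to 0$ along $a$. I expect this is the step that genuinely forces the hypothesis $\pd C\le 1$ (ensuring that $P_1$ is projective, so that there is no obstruction in higher Ext), and in which the argument departs from a purely formal manipulation with the almost split property used in the forward direction.
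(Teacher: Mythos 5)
Your forward direction is correct and complete: it uses only that $q\colon B\to C$ is right almost split in \M\ and that $\iota\colon A\to B$ is a kernel of $q$, and the contradiction with the non-splitness of the almost split sequence is obtained exactly as you describe. (Note the paper does not prove this lemma but cites \cite[3.6]{AV}, so your argument has to stand on its own.)

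The converse, however, has a genuine gap, and you have located it yourself: you never actually produce the non-zero element of $\Ext{R}{C}{X}$. The appeal to ``minimality of the chain map $a\colon P_1\to A$'' is not an argument, and --- this is the concrete missing idea --- your sketch never invokes the \emph{left} almost split property of $\iota\colon A\to B$, which is the essential input for this direction (the forward direction used only the right almost split property of $q$). The projective resolution is not where the content lies. A correct argument runs as follows. Suppose $\Ext{R}{C}{X}=0$ and $f\colon X\to A$ is non-zero; put $I=\Img f\neq 0$. Applying $\Hom{R}{C}{-}$ to $X\twoheadrightarrow I$ and using $\pd (C)\le 1$, so that $\Exti{2}{R}{C}{\Ker f}=0$, gives a surjection $\Ext{R}{C}{X}\twoheadrightarrow\Ext{R}{C}{I}$, whence $\Ext{R}{C}{I}=0$. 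The long exact sequence for $0\to I\to A\xrightarrow{\pi} A/I\to 0$ then shows that $\pi_*\colon\Ext{R}{C}{A}\to\Ext{R}{C}{A/I}$ is injective. But $\pi$ is not a split monomorphism (it is not even injective, since $I\neq 0$), so by the left almost split property of $\iota$ it factors through $\iota$; equivalently, the pushout of the almost split sequence $\epsilon$ along $\pi$ splits, i.e.\ $\pi_*\epsilon=0$. Injectivity of $\pi_*$ forces $\epsilon=0$, contradicting the non-splitness of an almost split sequence. This shows, as you suspected, that $\pd (C)\le 1$ is genuinely needed --- but the mechanism is the vanishing of $\Exti{2}{R}{C}{-}$ in the two long exact sequences (and, if you prefer your formulation, it is exactly what makes $\epsilon$ lie in the image of $f_*$), not any feature of the chain map $a$.
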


\begin{cor}\cite[1.4]{key}\label{ARformula}
Let $R$ be a  hereditary ring, and let $\exs{A}{}{B}{}{C}$ be an almost split sequence in \M. Then $^\circ A=C^\perp$ {and} $C^\circ ={^\perp A}$.
\end{cor}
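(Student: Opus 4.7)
The plan is to read off both equalities directly from Lemma \ref{3.6} together with its symmetric counterpart, exploiting the fact that a hereditary ring has every module of projective dimension at most one (and dually, of injective dimension at most one). Thus for the almost split sequence $\exs{A}{}{B}{}{C}$, both hypotheses of Lemma \ref{3.6} and its dual are automatic, so we obtain biconditional versions of both statements.

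For the first equality $^\circ A = C^\perp$, I would argue as follows. Since $R$ is hereditary, $C^\perp$ is just $\{X\in\M\mid \Ext{R}{C}{X}=0\}$, because $\Exti iRC{-}=0$ automatically for all $i\ge 2$. The projective dimension of $C$ is at most one, so Lemma \ref{3.6} applies in both directions: for every $X\in\M$ one has $\Hom RXA=0$ if and only if $\Ext RCX=0$. Taking the conjunction over all $X$, this exactly says $X\in{}^\circ A$ iff $X\in C^\perp$, which is the claim.

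For the second equality $C^\circ = {}^\perp A$, the symmetric argument is needed, i.e., the statement that $\Hom RCX=0$ iff $\Ext RXA=0$ for every $X\in\M$. Since $R$ is hereditary, $A$ has injective dimension at most one, so $^\perp A$ reduces to $\{X\mid \Ext RXA=0\}$, and the dual of Lemma \ref{3.6} applies in both directions. The dual lemma can either be quoted from \cite{AV} (where it is established in parallel with Lemma \ref{3.6}, exploiting the fact that an almost split sequence is simultaneously left and right almost split), or derived by running the original proof with arrows reversed, using that the right almost split morphism $B\to C$ plays the role that $A\to B$ played in the first half.

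The main obstacle is purely bookkeeping: the excerpt states only one direction of the Auslander--Reiten formula, so I would have to make explicit that the symmetric version is available in this generality. Once that is granted, both equalities are immediate translations of the biconditional and no further computation is required.
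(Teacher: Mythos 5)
Your argument is correct and is exactly the derivation the paper intends: the corollary is stated as an immediate consequence of Lemma \ref{3.6} ``and its dual,'' using that over a hereditary ring every module has both projective and injective dimension at most one, so both converses apply and $C^\perp$, ${}^\perp A$ collapse to the single-$\Ext$ orthogonals. The paper simply cites \cite[1.4]{key} rather than writing this out, but your bookkeeping (including the need to invoke the dual lemma for the second equality) matches the intended proof.
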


 \begin{prop}\label{torsion}
Let    $\Scal$ be a resolving subcategory of \m\ consisting of   modules of projective dimension at most one, and set $\Lcal=\Scal^\perp$.
 \begin{enumerate}
 \item Assume $R$ is right noetherian.
Then there is a torsion pair 
  $(\varinjlim (T^o)^{<\omega}, \Gcal)$   in \M\ with the torsion-free class  $\varinjlim (T^o)^{<\omega}=(\Lcal^{<\omega})^o$ and the torsion class $\Gcal= \Gen \Lcal^{<\omega}= \varinjlim\Lcal^{<\omega}$. 
 \item 
 Assume    that $R$ is  artinian and hereditary, and that every finitely generated indecomposable non-injective right $R$-module is first term of an almost split sequence in \M\ consisting of finitely generated modules. Assume further that the  indecomposable modules of $\Scal$ are not injective and form  a union of Auslander-Reiten-components. 
Then $\varinjlim (T^o)^{<\omega} = \mathcal F$, and  there is a torsion pair  $(\mathcal F, \Gen \Lcal^{<\omega})$   in \M.
\end{enumerate}   
\end{prop}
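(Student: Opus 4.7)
For part (1), I would construct the torsion pair directly via the torsion-submodule decomposition. Since $R$ is right noetherian and the tilting class $\Lcal$ is closed under direct limits, every finitely generated module $M$ has a finitely generated torsion submodule $tM \in \Lcal^{<\omega}$ with $M/tM \in (T^o)^{<\omega}$, and for an arbitrary $M = \varinjlim M_i$ (with $M_i$ finitely generated) the torsion radical commutes with the direct limit, giving $tM = \varinjlim tM_i \in \varinjlim \Lcal^{<\omega}$ and $M/tM \in \varinjlim (T^o)^{<\omega}$. Orthogonality between the two classes reduces to the observation $\Hom R L N = 0$ for $L \in \Lcal^{<\omega}$ and $N \in (T^o)^{<\omega}$---since $L \in \Gen T$ is a quotient of some $T^{(I)}$ while $\Hom R T N = 0$---extended to direct limits via the identity $\Hom R{\varinjlim L_\alpha}{-} = \varprojlim \Hom R{L_\alpha}{-}$ and the factoring of maps out of finitely generated modules through one term of a direct limit. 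This yields the torsion pair, and the identifications $\varinjlim \Lcal^{<\omega} = \Gen \Lcal^{<\omega}$ (use that $\bigoplus L_i$ is a direct limit of finite subsums and that the torsion class is closed under quotients) and $\varinjlim (T^o)^{<\omega} = (\Lcal^{<\omega})^o$ (same Hom-$\varprojlim$ identity) complete the statement.

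For part (2), the core claim is $\Scal = (T^o)^{<\omega}$, which immediately yields $\Fcal = \varinjlim \Scal = \varinjlim (T^o)^{<\omega}$ and so upgrades the torsion pair of (1) to $(\Fcal, \Gen \Lcal^{<\omega})$. For $\Scal \subseteq (T^o)^{<\omega}$, I reduce by Krull--Schmidt to an indecomposable $S \in \Scal$. By hypothesis $S$ is non-injective, so there is an almost split sequence $0 \to S \to B \to C \to 0$ in \M\ with finitely generated terms, and because $\Scal$ is a union of Auslander--Reiten components, $C$ lies in the same component as $S$, hence in $\Scal$. This gives $\Ext R C T = 0$ (since $T \in \Lcal = \Scal^\perp$), and Corollary \ref{ARformula} turns this into $\Hom R T S = 0$.

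For the reverse inclusion $(T^o)^{<\omega} \subseteq \Scal$, I first exclude indecomposable injectives: applying $\Hom R{-}{M}$ to the tilting sequence $0 \to R \to T_0 \to T_1 \to 0$ and using $\Ext R{T_1}{M} = 0$ produces a surjection $\Hom R{T_0}{M} \twoheadrightarrow \Hom R R M \neq 0$, so $\Hom R T M \neq 0$. Thus any indecomposable $M \in (T^o)^{<\omega}$ is non-injective and admits an almost split sequence $0 \to M \to B \to N \to 0$ with $N$ finitely generated; Corollary \ref{ARformula} converts $\Hom R T M = 0$ into $\Ext R N T = 0$, and since $N$ is finitely presented and $R$ is hereditary, $\Ext R N{-}$ commutes with direct sums and is right exact, so this vanishing extends to $\Ext R N L = 0$ for every $L \in \Gen T = \Lcal$. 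Hence $N \in {}^\perp \Lcal \cap \rfmod R$. The principal obstacle is to conclude $N \in \Scal$ from this: this requires the uniqueness of the resolving subcategory generating the tilting cotorsion pair $(\Mcal,\Lcal)$, a consequence of Theorem \ref{resol}, which identifies $\Scal$ with ${}^\perp \Lcal \cap \rfmod R$ (both are resolving, consist of modules of projective dimension at most one, and have perp equal to $\Lcal$). Once this is granted, $N \in \Scal$, and since $\Scal$ is a union of Auslander--Reiten components and $M$ lies in the component of $N$, it follows that $M \in \Scal$.
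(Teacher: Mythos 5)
Your argument is correct, and for part (2) it is essentially the paper's proof: everything hinges on the identity $(T^o)^{<\omega}=\Scal$, established in both directions by running Corollary \ref{ARformula} along almost split sequences and using that $\Scal$ is a union of Auslander--Reiten components. The genuine divergences are in part (1) and in two local steps of part (2). For (1) the paper simply cites \cite[4.5.2]{GT} for the restriction of $(T^o,\Lcal)$ to \m\ and \cite[4.4]{CB} for the fact that applying $\varinjlim$ to a torsion pair in \m\ yields a torsion pair in \M; you reprove this by hand, which does work over a right noetherian ring, though you should make explicit that $\varinjlim\Lcal^{<\omega}$ is closed under epimorphic images (needed for $\Gen\Lcal^{<\omega}\subseteq\varinjlim\Lcal^{<\omega}$; it follows by writing a quotient of $\varinjlim X_\alpha$ as the directed union of the images of the $X_\alpha$). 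In (2) you exclude injectives via the sequence from (T3), where the paper uses that $\Lcal$ generates all injectives, and you derive $\Ext{R}{N}{X}=0$ for all $X\in\Lcal$ from $\Ext{R}{N}{T}=0$ by hereditariness, where the paper gets it in one step from $M\in T^o=\Lcal^o$ and Corollary \ref{ARformula}; both routes are fine. The one point needing care is the step you rightly flag as the principal obstacle, namely $N\in\Scal$: the fact required is $\Mcal^{<\omega}=\Scal$, which the paper takes from \cite[5.2.1]{GT} (this is the uniqueness remark following Theorem \ref{resol}). Your parenthetical justification --- that two resolving subcategories of \m\ of projective dimension at most one with the same perpendicular class coincide --- is exactly the statement of that result, not a proof of it (it rests on a filtration argument, not on formal properties of perpendicular classes), so it should be cited rather than asserted; with that citation in place your argument is complete.
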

\begin{proof}
(1) Since  $R$ is right noetherian, we know from \cite[4.5.2]{GT} that $(T^o, \Lcal )$ induces a torsion pair $((T^o)^{<\omega}, \Lcal^{<\omega})$ in \m. Now we apply \cite[4.4]{CB} to obtain a torsion pair $(\varinjlim(T^o)^{<\omega}, \varinjlim\Lcal^{<\omega})$ in \M\ with the stated properties.
\\
(2) First, since $R$ is left noetherian and  $\Ccal^{<\omega}$ consists of modules of projective dimension at most one, we have the cotilting  torsion pair  $(\mathcal F, {}^o(W^\ast))$ in \M. 
 
 Furthermore, the assumptions on $\Scal$ imply by   Corollary \ref{ARformula} that $\Lcal=\Scal^\perp={}^o \Scal$. Hence $(T^o, \Lcal)$ coincides with  the torsion pair 
  $( ({}^o \Scal)^o, {}^o \Scal)$ generated by $\Scal$. 
  
  We claim that $(T^o)^{<\omega}=\Scal$. The inclusion $\supseteq$ is obvious. For the reverse one, consider $A\in (T^o)^{<\omega}\subseteq\Lcal^o$, and assume w.l.o.g.\ that $A$ is indecomposable. Since the tilting class $\Lcal$ generates all the injective modules, $A$ is not injective. Thus there is an almost split sequence $0\to A\to B\to C\to 0$
  in \M\ consisting of finitely generated modules, and for all $X\in\Lcal$ we have   by Corollary \ref{ARformula}
  that $X\in{}^oA=C^\perp$, which shows that $C\in\Mcal$. Then $C\in\Mcal^{<\omega}=\Scal$ by \cite[5.2.1]{GT}, hence also $A\in\Scal$. 
  
  Finally, by (1), $\Fcal=\varinjlim\Scal = \varinjlim (T^o)^{<\omega}$, which concludes the proof.
\end{proof} 


\begin{ex}\label{needed}
{\rm
Assume that $R$   is an  indecomposable, artinian, hereditary, left pure-semisimple ring of infinite representation type. Choose $\Scal=\add\p$ where $\p$ is the preprojective component of \m, and consider the   corresponding torsion and cotorsion pairs.  

It is shown in \cite[4.3]{key} that in  this case  the module $_RW$ from section \ref{Dcal}   is finitely generated and product-complete, and the  classes ${}^\perp\Dcal$ and $\Ccal$ in  \ref{Dcal} and \ref{Ccal} coincide, cf. Theorem \ref{resume}. In other words,  we have a (co)tilting cotorsion pair $(\Ccal, \Dcal)$ in \LM\  generated by $\Ccal^{<\omega}$,  and $W$ is a (co)tilting module (co)generating this cotorsion pair. 

Moreover, in \LM\ we have a split cotilting torsion pair $(\Ccal, {}^o(T^\ast))$, and a split tilting torsion pair $(W^o,\Dcal)$, where $W^o\subseteq \Ccal$, see \cite[4.6 and 4.7]{key}.
In \M\ we have a tilting torsion pair $(T^o,\Lcal)$, and  by Proposition \ref{torsion}(2), we have a cotilting torsion pair $(\varinjlim\add\p, \Gen \Lcal^{<\omega})$.

Now, since  we are assuming that $R$ has infinite representation type,  we are also assuming that there are finitely generated indecomposable non-injective left $R$-modules which are not first terms of an almost split sequence in \lm, see for example \cite{ZH1}. So the assumptions of Proposition \ref{torsion}(2) are not satisfied in $\LM$.  Indeed, 
let us consider the tilting torsion  pair $(W^o,\Dcal)$ in \LM. By Proposition \ref{torsion}(1) we  obtain a torsion pair $(\varinjlim (W^o)^{<\omega}, \tilde{\Gcal})$ in \LM.
  Here the  torsion-free class $\varinjlim (W^o)^{<\omega}=(\Dcal^{<\omega})^o$ is  contained in the cotilting torsion-free class $\Ccal=\varinjlim \Ccal^{<\omega}$, but in contrast to Proposition \ref{torsion}(2), this inclusion is proper. In fact, $W$ belongs to $\Ccal$, but it does not belong to $(\Dcal^{<\omega})^o$ as $W\in\Dcal^{<\omega}$. 
}\end{ex}

\bigskip

\section{Relative Baer modules}

Let $R$ be an arbitrary ring and $\mathcal T$ be a torsion class in $\rmod
R$. A module $M \in \rmod R$   is a {\em Baer module for $\mathcal T$} provided that $\Ext RMT = 0$ for all $T \in \mathcal T$.

It is shown in \cite{baerml} that the study of Baer modules can often be reduced to the countably generated case. To recall this, we need further notation.

Let $\sigma$ be an ordinal. An increasing chain of submodules,
$\mathcal J = ( M_\alpha \mid \alpha \leq \sigma )$, of a module
$M$ is called a {\em filtration} of $M$ provided that $M_0 = 0$,
$M_\alpha = \bigcup_{\beta < \alpha} M_\beta$ for all limit
ordinals $\alpha \leq \sigma$ and $M_\sigma = M$.

Given a class of modules $\mathcal C$ and a module $M$, a filtration $\mathcal J$ is a {\em $\mathcal
C$--filtration} of $M$ provided that $M_{\alpha + 1}/M_\alpha$ is isomorphic to some element of
$\mathcal C$ for each $\alpha < \sigma$. In this case we say that $M$ is {\em $\mathcal C$--filtered}.


%
\begin{thm}\cite[Theorem 1]{baerml}\label{reduction}
Let $R$ be an $\aleph_0$--noetherian ring and $\mathcal T$ be a torsion class in $\rmod R$ 
such that $^{\perp_1} \mathcal T = {}^\perp \mathcal T$. Assume that either $\mathcal T$ consists of
modules of finite injective dimension, or ${}^\perp \mathcal T$ consists of modules of finite
projective dimension. Then a module $M$ is a Baer module  for $\mathcal T$
if and only if it has a filtration $\mathcal M = ( M_\alpha \mid \alpha \leq \kappa )$ such that,
for each $\alpha < \kappa$,
$M_{\alpha + 1}/M_\alpha$ is a countably generated Baer module for $\mathcal T$.
\end{thm}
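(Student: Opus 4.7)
The `if' direction will be immediate from Eklof's lemma (\cite[3.1.2]{GT}): since the class ${}^{\perp_1}\mathcal{T}$ is closed under transfinite extensions, any module admitting a filtration with countably generated Baer subquotients lies in ${}^{\perp_1}\mathcal{T}$ and is therefore itself a Baer module for $\mathcal{T}$.

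For the `only if' direction, my plan is to establish that the Baer class $\mathcal{B} := {}^{\perp_1}\mathcal{T}$ is \emph{countably deconstructible}, i.e.\ every $M \in \mathcal{B}$ admits a filtration by countably generated modules of $\mathcal{B}$. The strategy proceeds in two stages. First, I would show that the cotorsion pair $(\mathcal{B}, \mathcal{B}^{\perp_1})$ is generated by a set $\mathcal{S}_0$ of countably presented modules. Here the two standing hypotheses combine: the equality ${}^{\perp_1}\mathcal{T} = {}^\perp\mathcal{T}$ permits dimension shifting that reduces the vanishing of all $\mathrm{Ext}^i(-,T)$ to the vanishing of $\mathrm{Ext}^1$, while the finite (injective, resp.\ projective) dimension hypothesis lets one truncate the corresponding injective coresolutions of members of $\mathcal{T}$ (resp.\ projective resolutions of members of ${}^\perp\mathcal{T}$) to extract, along the lines of the Bazzoni--Herbera argument invoked in Theorem \ref{resol}, a set of countably presented generators. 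Second, I would apply the Hill-lemma / Eklof--Trlifaj machinery (\cite[Chapters 6--7]{GT}) over the $\aleph_0$-noetherian ring $R$: set-generation of the cotorsion pair, together with the $\aleph_0$-noetherian hypothesis (which identifies countably generated and countably presented in the relevant range), yields a Hill family of countably generated submodules of $M$, from which a filtration of $M$ with countably generated Baer subquotients is read off.

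I expect the main obstacle to be the first stage: reducing the potentially proper class $\mathcal{T}$ to a \emph{set} of test modules whose $\mathrm{Ext}^1$-orthogonal still coincides with $\mathcal{B}^{\perp_1}$. Neither of the two technical hypotheses is enough by itself --- the equality of orthogonals without dimension control would leave infinitely many Ext-degrees to manage in parallel, while bounded dimension without the equality would leave the $\mathrm{Ext}^1$-criterion unreachable from the deeper obstructions that the Ext-classes of the injective or projective coresyzygies encode. A delicate point will be verifying that the truncation produces \emph{countably} presented, not merely small, witnesses; this is where the $\aleph_0$-noetherian hypothesis is genuinely used, since it ensures that finitely generated submodules of the truncations are finitely presented and that passing to countably generated closures does not enlarge the presentation type. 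Once a countably presented generating set is in place, the deconstructibility conclusion, and hence the required filtration, is a formal consequence of the standard machinery, with no further dependence on the specific structure of $\mathcal{T}$.
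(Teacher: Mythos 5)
First, note that the paper itself does not prove this statement: it is quoted verbatim from \cite[Theorem 1]{baerml}, so there is no in-paper argument to compare with, and your proposal must stand on its own. Your ``if'' direction is correct and is indeed just the Eklof Lemma \cite[3.1.2]{GT}. The gap is in the ``only if'' direction, and it sits exactly where you predicted the main obstacle: stage one, the claim that the cotorsion pair $({}^{\perp_1}\mathcal{T},({}^{\perp_1}\mathcal{T})^{\perp_1})$ is generated by a \emph{set} of countably presented modules. The hypotheses only give you a cotorsion pair \emph{cogenerated} by the (proper class) $\mathcal{T}$. The finite-type machinery of \cite{bazher,bast} invoked in Theorem \ref{resol} does not apply: it concerns tilting cotorsion pairs, which are generated by a set from the outset (by $R$ and the tilting module), the issue there being descent to \emph{finitely} presented generators. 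For a left Ext-orthogonal class such as ${}^{\perp_1}\mathcal{T}$ there is no general mechanism producing set-generation; establishing it would immediately yield completeness and deconstructibility of the Baer class, a conclusion strictly stronger than the theorem and of a type that is notoriously delicate for left orthogonals (the Whitehead problem for ${}^{\perp_1}\mathbb{Z}$ --- admittedly not a torsion-class instance --- shows such statements can even be independent of ZFC). So your stage one essentially presupposes the conclusion, and no argument is offered for it beyond an appeal to results that live in a different setting.

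The intended proof avoids set-generation entirely and is a relative version of the classical Griffith/Kaplansky-style reduction: one writes $M$ as the union of a continuous chain $(M_\alpha)$ with countably generated successive quotients, arranging at each step that every homomorphism $M_\alpha\to T$ with $T\in\mathcal{T}$ extends to $M$, equivalently that $M/M_\alpha$ is again Baer. The $\aleph_0$-noetherian hypothesis together with the finite (injective, resp.\ projective) dimension hypothesis is what makes this closure process stabilize after countably many steps, and the equality ${}^{\perp_1}\mathcal{T}={}^{\perp}\mathcal{T}$ makes ${}^{\perp}\mathcal{T}$ closed under kernels of epimorphisms, so that from $0\to M_{\alpha+1}/M_\alpha\to M/M_\alpha\to M/M_{\alpha+1}\to 0$ the subquotients $M_{\alpha+1}/M_\alpha$ are themselves Baer. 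If you want to repair your write-up, replace stage one by such a direct transfinite closure construction rather than attempting to deconstruct a set-generated cotorsion pair.
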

 
\medskip
 
Assume again that $\Scal$ is a resolving subcategory of \m\ consisting of modules of projective dimension at most one, and let $(\Mcal,\Lcal)$ be the cotorsion pair generated by $\Scal$. Notice that $\mathcal M = {}^{\perp_1} \Lcal = {}^\perp \Lcal $ consists of modules of projective dimension $\leq 1$, see \cite[2.2]{AC1}.

We will now focus on Baer modules for the torsion class $\Gcal = \Gen \Lcal^{<\omega}$ from Proposition \ref{torsion}. We will denote this class by $\mathcal B$.
 
 \begin{lem}\label{generalbaerm}
 Let $R$ be a right noetherian ring. Assume that every module $A\in\Lcal$ can be purely embedded in a direct product of modules from $\Lcal^{<\omega}$, and that $\mathcal B$ consists of modules of projective dimension $\leq 1$.
  Then $\mathcal B = \Mcal$. 
 \end{lem}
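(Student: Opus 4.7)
The inclusion $\Mcal \subseteq \mathcal{B}$ is immediate: since $(\Mcal, \Lcal)$ is $1$-tilting, $\Lcal$ is closed under epimorphic images, so $\Gcal = \Gen \Lcal^{<\omega} \subseteq \Lcal$ and $\Mcal = {}^{\perp_1}\Lcal \subseteq {}^{\perp_1}\Gcal = \mathcal{B}$.

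For the converse, my plan is to reduce to countably generated Baer modules using Theorem~\ref{reduction} and then handle the countable case by passing to the pure-injective envelope of $L$. Applying Theorem~\ref{reduction} to $\mathcal{T} = \Gcal$: $R$ is right noetherian, and the assumption that $\mathcal{B}$ consists of modules of projective dimension at most one forces $\mathcal{B} = {}^{\perp_1}\Gcal = {}^{\perp}\Gcal$, a class of modules of finite projective dimension. Hence every Baer module admits a filtration by countably generated Baer modules, and since $\Mcal = {}^{\perp_1}\Lcal$ is closed under filtrations by Eklof's Lemma, it suffices to prove that every countably generated (hence countably presented, by noetherianness) Baer module $M$ lies in $\Mcal$, i.e.\ satisfies $\Ext R M L = 0$ for every $L \in \Lcal$.

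Fix such $M$ and $L$, and let $PE(L)$ denote the pure-injective envelope of $L$. Because the tilting class $\Lcal$ is definable we have $PE(L) \in \Lcal$; the hypothesis then provides a pure embedding $PE(L) \hookrightarrow \prod_j L_j$ with $L_j \in \Lcal^{<\omega}$, and this embedding splits because $PE(L)$ is pure-injective. Thus $PE(L)$ is a direct summand of $\prod_j L_j$, and since $\Exti 1 R M -$ commutes with direct products while each $L_j \in \Gcal$ satisfies $\Ext R M {L_j} = 0$ by the Baer property, we deduce $\Exti 1 R M {PE(L)} = 0$. The pure-exact sequence $0 \to L \to PE(L) \to C \to 0$ (with $C \in \Lcal$ by quotient-closure) combined with the long exact $\mathrm{Ext}$-sequence then reduces the task to showing that every morphism $\phi \colon M \to C$ lifts to a morphism $M \to PE(L)$, equivalently that the pullback pure-exact sequence $0 \to L \to E_\phi \to M \to 0$ splits.

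The main obstacle lies precisely in this splitting step. My planned approach is to write $M = \bigcup_n M_n$ as a countable ascending union of finitely generated (hence finitely presented) submodules; purity of $PE(L) \to C$ lifts each restriction $\phi|_{M_n}$ to $M_n \to PE(L)$, and one must then make these local lifts coherent, an obstruction controlled by a $\varprojlim^1$ over the inverse system $(\Hom R {M_n} L)_n$. I expect the Baer hypothesis on $M$, together with the already established vanishing $\Exti 1 R M {PE(L)} = 0$, to force the required Mittag-Leffler condition. An alternative strategy, adapting~\cite{baerml} from the tame hereditary setting, is to prove directly that countably presented Baer modules are Mittag-Leffler and hence pure-projective by Raynaud--Gruson, which would make the pullback sequence split automatically. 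This coherence issue is where the substantive technical work should lie.
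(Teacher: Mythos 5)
Your first half coincides with the paper's proof: the easy inclusion $\Mcal\subseteq\mathcal B$, the reduction to countably generated $M$ via Theorem~\ref{reduction} together with the Eklof Lemma, and the observation that countably generated means countably presented over a right noetherian ring are all exactly as in the paper. The problem is that the heart of the lemma is precisely the step you leave open. After the reduction, everything hinges on showing $\Ext{R}{M}{A}=0$ for $A\in\Lcal$, knowing only that $\Ext{R}{M}{G}=0$ for all $G\in\Gcal$ and that $A$ is a pure submodule of a product of modules from $\Lcal^{<\omega}\subseteq\Gcal$. Your detour through $PE(L)$ correctly disposes of $\Ext{R}{M}{PE(L)}$ (definability of $\Lcal$, splitting of a pure embedding with pure-injective domain, and commutation of $\mathrm{Ext}^1(M,-)$ with products are all fine), but it merely relocates the difficulty into the lifting problem along the pure epimorphism $PE(L)\to C$, which you explicitly do not solve (``I expect the Baer hypothesis \dots to force the required Mittag-Leffler condition''; ``this is where the substantive technical work should lie''). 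That is a genuine gap: the entire nontrivial content of the lemma is this one step.

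The paper closes it by invoking \cite[2.5]{bazher} (or \cite[2.7]{SarSt}): if $M$ is countably presented and $M^{\perp_1}$ contains a class closed under direct sums, then $M^{\perp_1}$ contains every pure submodule of a direct product of modules from that class. Applied to the class $\Gcal$ (closed under direct sums, being a torsion class) and to the hypothesis that every $A\in\Lcal$ purely embeds in a product of modules from $\Lcal^{<\omega}$, this yields $\Lcal\subseteq M^{\perp_1}$ directly --- no pure-injective envelope is needed at all. Your instinct that a relative Mittag-Leffler/stationarity argument is what is required is exactly right (that is how the cited lemmas are proved), but your fallback --- that countably presented Baer modules are absolutely Mittag-Leffler and hence pure-projective, which would split the pullback sequence --- is stronger than what is available in this generality; the correct statement is $\Gcal$-stationarity, not pure-projectivity. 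To complete the proof, replace the speculative final paragraph by a proof or citation of the Bazzoni--Herbera lemma.
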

  \begin{proof}
Since $\Gen\Lcal^{<\omega}\subseteq{\mathcal L}$, we have  $\Mcal={}^\perp {\mathcal L}\subseteq \mathcal B$. For the reverse inclusion, let $M \in \mathcal B$. By 
Theorem \ref{reduction} and by the Eklof Lemma \cite[3.1.2]{GT}, we can assume w.l.o.g.\ that $M$ is countably generated. Since $M^{\perp_1}$ contains $\Gcal$ and   $\Gcal$ is closed under direct sums, we conclude from \cite[2.5]{bazher} or \cite[2.7]{SarSt} that $M^{\perp_1}$ also contains every pure submodule of a direct product of modules in $\Gcal$. So by our assumption, $M^{\perp_1}$ contains $\Lcal$, which obviously means that $M\in\Mcal$.
\end{proof}

Since $(\mathcal M, \mathcal L)$ is generated by $\mathcal S$, $\mathcal M$ coincides with the class of all direct summands of $\mathcal S$-filtered modules (cf.\ \cite[3.2.4]{GT}). We have a stronger result in the particular case of artin algebras:  

\begin{thm}\label{filtbaer}
 Let $R$ be an artin algebra. Then $\mathcal B = \mathcal M$ coincides with the class of all $\mathcal S$-filtered modules.
 \end{thm}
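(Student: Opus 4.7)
My plan is to establish the equality by verifying the chain of inclusions
$\text{filt}(\Scal) \subseteq \Mcal \subseteq \calb \subseteq \text{filt}(\Scal).$
Two of these are immediate: $\text{filt}(\Scal) \subseteq \Mcal$ follows from Eklof's lemma applied to $\Scal \subseteq \Mcal$, and $\Mcal \subseteq \calb$ holds because $\Gcal = \Gen\Lcal^{<\omega} \subseteq \Lcal$. So the work consists of proving $\calb \subseteq \text{filt}(\Scal)$, which I would split into two steps: first $\calb = \Mcal$, and then $\Mcal \subseteq \text{filt}(\Scal)$.

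For $\calb = \Mcal$ I would apply Lemma \ref{generalbaerm}, checking its three hypotheses in the artin algebra setting. Noetherianity is immediate. The pure embedding hypothesis holds because $\Lcal$ is a tilting class, hence a definable subcategory of $\M$ by Theorem \ref{resol}, and over an artin algebra every module in a definable subcategory embeds as a pure submodule of a direct product of finitely presented modules from the same subcategory; this is a standard consequence of the description of definable subcategories by pp-pairs together with the fact that $\Lcal = \varinjlim \Lcal^{<\omega}$ over the noetherian ring $R$. The projective dimension bound on $\calb$ is the delicate point: one argues that $\calb$ is closed under syzygies (since projectives lie in $\calb$), and then that for countably presented $M \in \calb$ and any $L \in \Lcal$, the commutation of $\Ext^1_R(M,-)$ with direct limits together with $L = \varinjlim L_i$ with $L_i \in \Lcal^{<\omega} \subseteq \Gcal$ forces $\Ext^1_R(M,L) = 0$, and then the syzygy closure pushes vanishing to $\Ext^2_R$ and gives $\pd M \leq 1$.

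For $\Mcal \subseteq \text{filt}(\Scal)$, the pair $(\Mcal,\Lcal)$ is generated by the set $\Scal$ of finitely presented modules over the noetherian ring $R$, so it is of countable type. Invoking Theorem \ref{reduction} with $\mathcal T = \Gcal$ (now legitimate, since $\calb = \Mcal$ consists of modules of projective dimension at most one) together with Eklof's lemma reduces the task to showing every countably generated $M \in \Mcal$ is $\Scal$-filtered. For such an $M$, write $M = \bigcup_{n\in\N} M_n$ with $M_n$ finitely generated, and inductively enlarge each $M_n$ to a submodule $N_n \subseteq M$ with $N_n$ admitting a finite $\Scal$-filtration and $N_{n+1}/N_n \in \filt\Scal$; this uses the resolving property of $\Scal$ together with the standard trick that a finitely generated $N \in \Mcal$ over an artin algebra admits a finite $\Scal$-filtration.

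The principal obstacle is the projective dimension bound on $\calb$ needed to invoke Lemma \ref{generalbaerm}, since it is only after this bound is established that Theorem \ref{reduction} becomes applicable. Once this hurdle is cleared, the transfinite refinement step for countably generated modules in $\Mcal$ is a well-established technique for cotorsion pairs of countable type generated by finitely presented modules, and the Hill lemma provides the standard toolkit for refining the approximations.
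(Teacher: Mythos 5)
Your overall skeleton ($\filt\Scal\subseteq\Mcal\subseteq\calb\subseteq\filt\Scal$, with Lemma \ref{generalbaerm} giving $\calb=\Mcal$ and Theorem \ref{reduction} reducing to countably generated modules) is exactly the paper's, but the two steps you treat as routine both contain genuine gaps.

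First, the projective dimension bound on $\calb$. Your argument does not work: the claim that $\calb$ is closed under syzygies ``since projectives lie in $\calb$'' is circular, because $\Omega M\in\calb$ is equivalent to $\Exti{2}{R}{M}{G}=0$ for all $G\in\Gcal$, which is precisely what is to be proved; moreover $\Ext{R}{M}{-}$ need not commute with direct limits for merely countably presented $M$, and the identity $\Lcal=\varinjlim\Lcal^{<\omega}$ fails in general (by Corollary \ref{contra} it amounts to contravariant finiteness of $\Scal$, and it fails e.g. for $\Scal=\add\p$ over a representation-infinite hereditary algebra, where the generic module lies in $\Lcal$ but not in $\Gen\tube=\varinjlim\Lcal^{<\omega}$). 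The correct argument is much simpler: over an artin algebra every injective is a direct sum of finitely generated indecomposable injectives, which lie in $\Lcal^{<\omega}$; hence $\Gcal=\Gen\Lcal^{<\omega}$ contains all homomorphic images of injectives, and dimension shifting along an injective coresolution gives $\Exti{i}{R}{M}{X}=0$ for all $i\geq 2$, all $X$, and all $M\in\calb$. (Similarly, your justification of the pure-embedding hypothesis via definable subcategories is false as a general statement; it suffices to note that every module purely embeds in the product of its finitely generated factor modules and that $\Lcal=\Gen T$ is closed under factors.)

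Second, the inclusion $\Mcal\subseteq\filt\Scal$ for countably generated $M$. You assert that one can ``inductively enlarge'' each finitely generated $M_n$ to $N_n$ with $N_{n+1}/N_n\in\filt\Scal$, but you give no mechanism, and the difficulty is real: every finitely generated submodule of $M$ automatically lies in $\Fcal^{<\omega}=\Scal$ (as $\Fcal$ is a torsion-free class), but to place the consecutive quotients in $\Scal$ one needs $M/N_n\in\Fcal$, and the torsion part of $M/M_n$ need not be finitely generated, so one cannot simply saturate. The paper's device is to realize $M$ as a direct summand of a module $N$ carrying an $\Scal$-filtration $(N_j\mid j\leq\omega)$ (via \cite[3.2.4]{GT} and the Hill Lemma), to set $M_{i+1}=M\cap N_j$ for suitable $j$, and to observe that $M/M_{i+1}$ embeds in the $\Scal$-filtered, hence torsion-free, module $N/N_j$; resolvingness of $\Fcal$ then forces $M_{i+1}/M_i\in\Fcal^{<\omega}=\Scal$. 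Without some argument of this kind your induction does not close.
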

  \begin{proof}
First, every module can be purely embedded in the product of all its finitely generated factor modules, see \cite[2.2.Ex 3]{C2}. Of course, if $A$ belongs to $\Lcal$ then all its finitely generated factor modules do. Since $\Lcal$ contains all injective modules, and each indecomposable injective module is finitely generated,
$\Gcal$ contains all homomorphic images of injective modules. This implies that  $\mathcal B$ consists of modules of projective dimension $\leq 1$. Hence
 Lemma \ref{generalbaerm} applies and gives $\mathcal B = \mathcal M$. Moreover, the Eklof Lemma \cite[3.1.2]{GT} gives that each $\mathcal S$-filtered module is in $\mathcal M$. 

Conversely, let $M \in \mathcal M$. By Theorem \ref{reduction}, $M$ is $\mathcal N$-filtered where $\mathcal N$ is the class of all countably generated modules from $\mathcal M$. So it remains to prove that each countably generated module $M \in \mathcal M$ is $\mathcal S$-filtered.

By \cite[3.2.4]{GT}, there is a module $N$ which is a union of an $\mathcal S$-filtration $( N_i \mid i \leq \sigma)$ such that $M$ is a direct summand in $N$. By the Hill Lemma \cite[4.2.6]{GT}, we can w.l.o.g.\ assume that $\sigma = \omega$. 
 
By induction on $i < \omega$, we will construct an $\mathcal S$--filtration, $(M_i \mid i \leq \omega )$, of the module $M$. Let $\{ g_i \mid i < \omega \}$ be an $R$--generating subset of $M$. Denote by $t$ the torsion radical corresponding to the torsion pair $(\mathcal F,{}^o\mathcal F)$ (where 
$\mathcal F = \varinjlim \mathcal S$ is the  torsion-free class from \ref{limScal}). Put $M_0 = 0$, and if $M_i$ is defined so that $M_i$ is finitely generated and $t(M/M_i) = 0$, we consider the least index $j < \omega$ such that $\left\langle  M_i,g_i \right\rangle \subseteq N_j$, and let $M_{i+1} = M \cap N_j$. 

Then $M_{i+1}$ is finitely generated because $N_j$ is such, and moreover $M/M_{i+1} \cong (M + N_j)/N_j \subseteq N/N_j$, so $t(M/M_{i+1}) = 0 = t(N/N_j)$ because $N/N_j$ is $\mathcal S$-filtered. 

Since $g_i \in M_{i+1}$ for each $i < \omega$, we have $M = M_\omega = \bigcup_{i < \omega} M_i$. 
Finally $\mathcal F$ is a resolving class, so the exact sequence 
$0 \to M_{i+1}/M_i \to M/M_i \to M/M_{i+1} \to 0$ yields $M_{i+1}/M_i \in \mathcal F ^{<\omega} = \mathcal S$,       
proving that $(M_i \mid i \leq \omega )$ is an $\mathcal S$-filtration of $M$. 
\end{proof}                     

\begin{rem} Theorem \ref{filtbaer} was first proved in the particular case when $R$ is a tame hereditary artin algebra and $\mathcal S$ is the class of all preprojective modules, see \cite{baerml}. There are many more analogies with \cite{baerml}: 

Let $R$ be a hereditary artin algebra, and $\Scal$ is a resolving subcategory of \m\,
satisfying the assumptions of Proposition \ref{torsion}(2). Denote by $\ell$ the torsion radical corresponding to the torsion pair $(\mathcal L ^o,\mathcal L)$. By Proposition \ref{torsion}(2) we have $\mathcal L ^o \subseteq \mathcal F$. 

Let $\mathcal B$ be the class of all Baer modules for $\mathcal G=\Gen \Lcal^{<\omega}$. Two modules $B, B^\prime \in \mathcal B$ are called {\em equivalent} iff $B/\ell(B) \cong B^\prime/\ell(B^\prime)$.     

As in \cite{baerml}, one can prove in our general setting that 

(1) $\mathcal F$ is exactly the class of all pure epimorphic images of the modules in $\mathcal B$.

(2) If $M \in \mathcal F$ then $\ell(M)$ is a pure submodule of $M$; if moreover $M \in \mathcal B$ then $\ell(B) \in \mbox{Add}(T)$.

(3) If  $B, B^\prime \in \mathcal B$, then $B$ is equivalent to $B^\prime$ iff there exist $L, L^\prime \in \mbox{Add}(T)$ such that $B \oplus L \cong B^\prime \oplus L^\prime$.   

(4) Equivalence classes of modules in $\mathcal B$ correspond bijectively to isomorphism classes of modules in the torsion--free class $\mathcal L ^o$.

However, there does not appear to be any general decomposition theorem for countably generated Baer modules extending \cite[Prop.13]{baerml} and \cite[4.3]{L2}.
\end{rem}

 
As a consequence of our description of the Baer modules, we obtain yet another characterization of when $\Mcal$ is closed under direct limits.

 \begin{cor}\label{splittp}
Let $R$ be a hereditary artin algebra.
 Assume further that the  indecomposable modules of $\Scal$ are not injective and form  a union of Auslander-Reiten-components.
Then $\Mcal$ coincides with $\mathcal F$ if and only if  $(\mathcal F,\Gen \Lcal^{<\omega})$ is a split  torsion pair.
  \end{cor}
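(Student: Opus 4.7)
The plan is to leverage Theorem \ref{filtbaer}, which identifies $\Mcal$ with the class $\calb$ of Baer modules for $\Gcal = \Gen\Lcal^{<\omega}$ and additionally realizes every module in $\Mcal$ as an $\Scal$-filtered module. The corollary then follows by translating splitting of the torsion pair into the Ext-vanishing property that defines $\calb$.

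For the forward direction, assume $\Mcal = \Fcal$. Then $\Fcal = \calb$ by Theorem \ref{filtbaer}, so $\Ext R F G = 0$ for every $F \in \Fcal$ and $G \in \Gcal$. The canonical short exact sequence $0 \to tM \to M \to fM \to 0$ furnished by the torsion pair $(\Fcal, \Gcal)$ has $tM \in \Gcal$, $fM \in \Fcal$, and is classified by $\Ext R {fM}{tM}$; the vanishing of this Ext group forces the sequence to split, hence $(\Fcal, \Gcal)$ is a split torsion pair.

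For the converse, assume $(\Fcal, \Gcal)$ splits. Splitting is equivalent to $\Ext R F G = 0$ for all $F \in \Fcal$ and $G \in \Gcal$, i.e.\ $\Fcal \subseteq \calb = \Mcal$. To obtain the reverse inclusion $\Mcal \subseteq \Fcal$, I use Theorem \ref{filtbaer} to write any $M \in \Mcal$ as the union of an $\Scal$-filtration $(M_\alpha)_{\alpha \leq \sigma}$, and argue by transfinite induction that every $M_\alpha$ lies in $\Fcal$. The base case is $M_0 = 0 \in \Fcal$; at a successor ordinal $\beta+1$ the short exact sequence $0 \to M_\beta \to M_{\beta+1} \to M_{\beta+1}/M_\beta \to 0$ has outer terms in $\Fcal$ (using the inductive hypothesis and $\Scal \subseteq \varinjlim \Scal = \Fcal$) and $\Fcal$ is closed under extensions; at a limit ordinal I invoke closure of $\Fcal$ under direct limits.

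The one nontrivial step, and the main obstacle in the converse, is justifying that $\Fcal$ is closed under direct limits. This is where the artin algebra hypothesis enters: the modules in $\Lcal^{<\omega}$ are then finitely presented, and unpacking the definition of the torsion pair $(\Fcal, \Gcal)$ yields $\Fcal = \{F : \Hom R X F = 0 \text{ for all } X \in \Lcal^{<\omega}\}$. Since $\Hom R X {-}$ commutes with direct limits whenever $X$ is finitely presented, this description makes closure of $\Fcal$ under direct limits immediate, the induction closes, and we conclude $M = M_\sigma \in \Fcal$.
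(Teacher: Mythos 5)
Your argument is correct and follows essentially the same route as the paper's: both directions reduce splitting of the torsion pair to the condition $\Ext{R}{F}{G}=0$ for $F\in\Fcal$, $G\in\Gen\Lcal^{<\omega}$, and the converse then invokes Theorem \ref{filtbaer} to identify the Baer modules for $\Gen\Lcal^{<\omega}$ with $\Mcal$. The only difference is that your transfinite induction establishing $\Mcal\subseteq\Fcal$ is unnecessary, since $\Fcal=\varinjlim\Mcal$ by \ref{limScal} and hence contains $\Mcal$ outright.
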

  \begin{proof}
We know from Proposition \ref{torsion}(2) that $(\mathcal F,\Gen \Lcal^{<\omega})$ is a   torsion pair. So, we have to show that  $\Mcal=\mathcal F$ if and only if $\Ext{R}{Z}{X}=0$ for all $X\in \Gen \Lcal^{<\omega}$ and $Z\in\mathcal F$.

The only-if part follows immediately from the fact that  $\Gen \Lcal^{<\omega}\subseteq \Lcal=\Mcal^\perp$. For the if-part, observe that if $\mathcal F \subseteq{}^{\perp_1} (\Gen \Lcal^{<\omega})$, then $\mathcal F$ consists of Baer modules for $\Gen \Lcal^{<\omega}$, hence it is contained in $\Mcal$ by Theorem \ref{filtbaer}.
\end{proof}
 

\bigskip

\section{Applications to hereditary artin algebras}

>From now on, we will assume that $R$ is an indecomposable representation-infinite hereditary artin algebra with the standard duality $D: \m \to \lm$.

Let $\q$, $\tube$ and $\p$   denote representative sets of
all indecomposable finitely generated preinjective, regular, and preprojective right modules, respectively. The corresponding sets of left modules are denoted by $_R\q$, $_R\tube$ and $_R\p$.  

We now apply our previous considerations to the resolving category $\Scal=\add \p$. Then our torsion pairs look as follows.

\subsection{The torsion pair generated by $\p$  in \M.} \label{preprojective}
 By the  Auslander-Reiten
formula   $$\mathcal L = \p^\perp= {}^o\p $$ so   $\mathcal L$ is the class of all {right} modules having no non-zero homomorphism to $\p$, or in other words, the class of all modules that have no non-zero finitely generated preprojective
direct summands (see \cite[Corollary 2.2]{R}).
There is a countably infinitely generated tilting right module generating $\mathcal L$, called the {\em
Lukas tilting module}, and denoted by $L$, cf.\ \cite{KT}. 

The torsion--free class corresponding to $\mathcal L$ will be denoted by $\mathcal P$. This is the
class of all (possibly infinitely generated) {\em preprojective right modules}.
 We have
$$ \mathcal P \cap \rfmod R  = \add \p$$
 

\noindent
(Note: in \cite{L1} and
\cite{L2}, preprojective modules are called `$\mathcal P ^\infty$-torsion-free', and the modules in $\Lcal$ are called `$\mathcal P ^\infty$-torsion').

\subsection{The torsion pair generated by $_R\q$  in \LM.}  \label{preinjective}
 By the Auslander-Reiten formula    $$\mathcal C  = {}^\perp (_R\q)= {}_R\q^o$$
so 
$ \mathcal C = {}_R\q^o $ is the class   of all {left} modules having no non-zero homomorphism from $\q$, or in other words,   the class of all modules that have no  non-zero finitely generated preinjective
direct summands.     By Theorem \ref{oneone}, we know that $D(L)$ is a cotilting module cogenerating $\mathcal C$.
 
 The corresponding torsion class is denoted by $\mathcal Q$.  This is the
class of all  (possibly infinitely generated) {\em preinjective left modules}, that is, of all (possibly infinite) direct sums of modules from $_R\q$, see \cite[3.3]{R}. 

Note that in the tame case, the torsion pair $(\mathcal C,\mathcal Q)$ is a split torsion pair, see \cite{R,RR}.

\medskip

\subsection{The torsion pair cogenerated by $_R\tube$  in \LM.}  \label{divisile}
As $\Ccal^{<\omega}=\add(_R\p\cup _R\tube)$, and since  from every module in $_R\p$ there is a non-zero map    to some module in  $_R\tube$,  
we infer from the Auslander-Reiten formula that
$$\mathcal D=(_R\tube)^\perp={}^o(_R\tube)$$
 is the torsion class of all {\em divisible  left modules}, see \cite{R}. The corresponding torsion--free class, called the class of all {\em reduced left modules}, is denoted by $\mathcal R$. 

We fix a tilting left module $W$ 
which generates $\mathcal D$.
If $R$ is tame, then it is shown in \cite{RR} that $W$
can be chosen as the direct sum of a set of representatives of the Pr\"ufer left $R$-modules and the generic left $R$-module $_RG$. This module is called the {\em Ringel tilting module}.  Moreover, in the tame case, the torsion pair $(\mathcal R,\mathcal D)$ is a split torsion pair, see \cite{R,RR}.

\subsection{The torsion pair generated by $\tube$ in   \M.}\label{torsionfree}
Dually we see that $\Lcal^{<\omega}=\add (\q\cup\tube)$, hence
by the Auslander-Reiten formula  $$\mathcal F = \tube^o = {}^\perp \tube$$
is the class of all {\em torsion-free right modules}, see \cite{R}. 
Moreover, $\Fcal=\varinjlim\add\p$, 
 and  $D(W)$ is a cotilting module which cogenerates $\mathcal F$, see Section \ref{limScal} and Proposition \ref{torsion}. In the tame case, $D(W)$ is the direct product of a set of representatives of the adic right $R$-modules and the generic right $R$-module $G_R$.
  
The corresponding torsion class is $\Gen\Lcal^{<\omega}=\Gen \tube$, called the class of all {\em torsion modules}, see \cite[3.5]{RR}. 

\medskip

Notice that $$\mathcal P = L^o= (^o \p)^o \subseteq \tube^o = \mathcal F, \quad\text{and}\quad
\mathcal Q ={}^o D(L)={}^o (_R\q^o) \subseteq {}^o(_R\tube) = \mathcal D$$

More precisely, we remark the following properties of the torsion pairs above. 

\subsection{Extremal torsion pairs.}\label{extremal}
The class $\mathcal{Q}=\, ^{\circ}(_R\mathbf{\q}^{\circ})$ is the smallest possible torsion class in \LM\ containing $_R\mathbf{q}$,  and
the class $\mathcal{R}=(\, ^{\circ}(_R\mathbf{t}))^{\circ}$ is the smallest possible torsion--free  class in \LM\ containing $_R\mathbf{t}$.
Note that $\mathcal{R}=W^o$ also contains $_R\p$.

So,
both torsion pairs $(\mathcal{C}, \mathcal{Q})$ and $(\mathcal{R}, \mathcal{D})$ have the property that the indecomposable finite length modules in the torsionfree class are precisely the modules  in $_R\p\cup{}_R\tube$, and the indecomposable finite length modules in the torsion class are precisely the modules in $_R\q$. Moreover, as shown in \cite[\S 3]{RR}, they are extremal with this property. More precisely, 
 if $(\mathcal{X},\mathcal{Y})$ is a  torsion pair in \LM\ such that 
  $\mathcal{X}$ contains $_R\mathbf{t}$ and $\mathcal{Y}$ contains $_R\mathbf{q}$,
   then 
$$\mathcal{R}\subseteq\mathcal{X}\subseteq\mathcal{C}\mbox{ and }
\mathcal{Q}\subseteq\mathcal{Y}\subseteq\mathcal{D}.$$

\medskip

Similarly, the class $\mathcal{P}=(^{\circ}\mathbf{p})^{\circ}$ is the smallest possible torsion--free class in \M\ that contains $\mathbf{p}$,
and
Gen$\tube$  is the smallest possible torsion class in \M\  containing $\mathbf{t}$.
Moreover, Gen$\tube$ also contains $\q$.

So, both torsion pairs $(\mathcal{P}, \mathcal{L})$ and $(\mathcal{F}, \text{Gen}\tube)$ have the property that the indecomposable finite length modules in the torsionfree class are precisely the modules  in $\p$, and the indecomposable finite length modules in the torsion class are precisely the modules in $\tube\cup\q$. Furthermore, they are extremal with this property, in the sense that if $(\mathcal{X},\mathcal{Y})$ is a  torsion pair in \M\ such that 
  $\mathcal{X}$ contains $\mathbf{p}$ and $\mathcal{Y}$ contains $\mathbf{t}$,
   then 
$$\mathcal{P}\subseteq\mathcal{X}\subseteq\mathcal{F}\mbox{ and }
\text{Gen}\tube\subseteq\mathcal{Y}\subseteq\mathcal{L}.$$

\medskip

\subsection{Baer modules}

A module $M$ is called {\em Baer} provided that $M$ is a Baer module for $\Gen \tube$.
As an application of Theorem \ref{filtbaer}, we obtain the following result (whose particular instance
for tame algebras is \cite[Theorem 2]{baerml}).

\begin{cor}\label{baerm}
Let $R$ be an indecomposable representation-infinite hereditary artin algebra. 
 A right $R$-module $M$ is Baer if and only if it is $\p$-filtered.
 \end{cor}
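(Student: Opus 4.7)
The plan is to obtain Corollary \ref{baerm} as a direct specialization of Theorem \ref{filtbaer} to the resolving subcategory $\Scal = \add\p$ of $\rfmod R$, once one checks that the setup of Sections 2--4 applies and that the classes match those introduced in Section 5.

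First I would verify the hypotheses of Theorem \ref{filtbaer}. Since $R$ is a hereditary artin algebra, every module has projective dimension at most one, so the bound on the projective dimension of modules in $\Scal$ is automatic. The category $\add\p$ contains $R$ (indecomposable projectives lie in $\p$) and is closed under direct summands by definition. To see that it is closed under extensions and under kernels of epimorphisms, I would use the identification $\add\p = \Mcal^{<\omega}$ established in the proof of Proposition \ref{torsion}(2) via \cite[5.2.1]{GT}: the class $\Mcal = {}^\perp\Lcal$ is resolving (being the left-hand side of a tilting cotorsion pair), and its intersection with $\rfmod R$ is therefore a resolving subcategory of $\rfmod R$.

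Next I would identify the torsion class $\Gcal = \Gen\Lcal^{<\omega}$ of Section 4 with $\Gen\tube$. This is exactly what is recorded in \ref{torsionfree} (following \cite[3.5]{RR}): $\Lcal^{<\omega} = \add(\q\cup\tube)$ and $\Gen\Lcal^{<\omega} = \Gen\tube$. In particular, the class $\mathcal B$ of Baer modules for $\Gcal$ studied in Section 4 coincides with the class of Baer modules in the sense defined immediately before Corollary \ref{baerm}.

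Given these identifications, Theorem \ref{filtbaer} yields that $M$ is Baer if and only if $M$ is $\add\p$-filtered. To finish, I would observe that by Krull-Schmidt every module in $\add\p$ is a finite direct sum of indecomposables from $\p$, so any $\add\p$-filtration can be refined (at each successive quotient) to a $\p$-filtration; the reverse implication is trivial since $\p\subseteq\add\p$. There is no real obstacle in this argument: all the substance has already been absorbed into Theorem \ref{filtbaer} and the identification of the torsion pairs carried out at the beginning of Section 5.
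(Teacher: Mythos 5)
Your proof is correct and is exactly the argument the paper intends: Corollary \ref{baerm} is obtained by applying Theorem \ref{filtbaer} to $\Scal=\add\p$, identifying the torsion class $\Gcal=\Gen\Lcal^{<\omega}$ with $\Gen\tube$ as in \ref{torsionfree}, and refining an $\add\p$-filtration to a $\p$-filtration. The only blemish is that your verification that $\add\p$ is resolving is circular: the identification $\add\p=\Mcal^{<\omega}$ via \cite[5.2.1]{GT}, and indeed the fact that $(\Mcal,\Lcal)$ is a tilting cotorsion pair via Theorem \ref{resol}, already presuppose that the generating class $\Scal=\add\p$ is resolving. This is repaired directly, without the cotorsion-pair machinery: $\add\p=\mathcal P\cap\m$, where $\mathcal P=({}^{\circ}\p)^{\circ}$ is a torsion-free class and hence closed under submodules and extensions, and every indecomposable projective lies in $\p$; therefore $\add\p$ contains $R$ and is closed under direct summands, extensions and kernels of epimorphisms, as required.
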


As in \cite{baerml} we obtain as  consequences
 
\begin{cor}\label{corol}  A module $M$ is Baer if and only if there is an exact sequence
$0 \to M \to L_1 \to L_2 \to 0$ where $L_1, L_2 \in \Add L$.
\end{cor}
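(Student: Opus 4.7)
The plan is to prove both directions separately, using Corollary \ref{baerm} which identifies the Baer class $\calb$ with $\Mcal = {}^\perp L$.

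For the $(\Leftarrow)$ direction, I will deduce $M \in \Mcal$ from the given exact sequence. For any $Y \in \Lcal$, applying $\HomOp_R(-,Y)$ produces the segment $\Ext{R}{L_1}{Y} \to \Ext{R}{M}{Y} \to \Exti{2}{R}{L_2}{Y}$. The first term vanishes since $L_1 \in \Add L \subseteq \Mcal = {}^\perp \Lcal$, and the last vanishes because $L$ has projective dimension at most one, so the same holds for $L_2$. Hence $\Ext{R}{M}{Y} = 0$, giving $M \in \Mcal$ and therefore $M \in \calb$ by Corollary \ref{baerm}.

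For the $(\Rightarrow)$ direction, suppose $M$ is Baer, i.e., $M \in \Mcal$. The plan is to build an $\Add L$-coresolution of length one by combining a free presentation of $M$ with the tilting sequence (T3). As recorded at the start of Section 4, $M$ has projective dimension at most one, so I will choose a free presentation $0 \to P_1 \to R^{(I)} \to M \to 0$ with $P_1$ projective. Summing the tilting sequence $0 \to R \to U_0 \to U_1 \to 0$ from (T3) over the index set $I$ yields $0 \to R^{(I)} \to U_0^{(I)} \to U_1^{(I)} \to 0$ with $U_0^{(I)}, U_1^{(I)} \in \Add L$. Composing the two inclusions gives $P_1 \hookrightarrow R^{(I)} \hookrightarrow U_0^{(I)}$, and the resulting filtration $P_1 \subseteq R^{(I)} \subseteq U_0^{(I)}$ of $U_0^{(I)}$, with successive quotients $M$ and $U_1^{(I)}$, produces the short exact sequence
$$0 \to M \to U_0^{(I)}/P_1 \to U_1^{(I)} \to 0.$$

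The key remaining step is to verify that $N := U_0^{(I)}/P_1$ lies in $\Add L$. On the one hand, $N$ is a quotient of $U_0^{(I)} \in \Add L \subseteq \Gen L = \Lcal$, so $N \in \Lcal$ because the tilting class $\Lcal$ is closed under quotients. On the other hand, $N$ is an extension of $U_1^{(I)} \in \Add L \subseteq \Mcal$ by $M \in \Mcal$, so $N \in \Mcal$ because $\Mcal$ is closed under extensions. Invoking the standard tilting-theoretic identity $\Mcal \cap \Lcal = \Add L$, valid for any $1$-tilting cotorsion pair, then gives $N \in \Add L$. This identity, whose proof relies on splitting the trace sequence $0 \to K \to L^{(J)} \to N \to 0$ using $K \in \Lcal$ and $N \in \Mcal$, is the key non-routine input in the construction.
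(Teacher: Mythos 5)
Your proof is correct, and it follows the standard argument that the paper itself only invokes implicitly (the paper gives no proof, deferring to \cite{baerml}): identify the Baer class with $\Mcal$ via Theorem \ref{filtbaer}, use $\pd L_2\le 1$ for the backward direction, and for the forward direction construct a special $\Lcal$-preenvelope of $M$ from (T3) and conclude with $\Mcal\cap\Lcal=\Add L$. The only cosmetic slip is writing $\Mcal={}^{\perp}L$ in the opening sentence where $\Mcal={}^{\perp}\Lcal={}^{\perp}(L^{\perp})$ is meant; the body of the argument uses the correct class throughout.
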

\begin{cor}\label{corola}  The following statements are equivalent for a module $M$.
\begin{enumerate}
\item
 $M$ is torsion--free.
 \item
 $M$ is a pure-epimorphic image of direct sum of indecomposable preprojective modules.
 \item  $M$  occurs as the end term in a pure--exact sequence
$$0 \to N \to B \to M \to 0$$ with a Baer module $B$  
and  $N \in \Add L$.
\end{enumerate}
\end{cor}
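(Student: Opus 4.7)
The plan is to prove the cycle $(1)\Rightarrow(2)\Rightarrow(3)\Rightarrow(1)$. For $(1)\Leftrightarrow(2)$, I will rely on the identification $\Fcal=\varinjlim\add\p$ recorded in \ref{torsionfree}. If $M\in\Fcal$, write $M$ as a directed colimit of finitely generated preprojective modules; the canonical presentation of a directed colimit as a pure epimorphic image of the direct sum of its terms, together with the decomposition of each finitely generated preprojective into indecomposable summands, yields (2). Conversely, given any epimorphism $\bigoplus_i P_i\twoheadrightarrow M$ with $P_i$ indecomposable preprojective, the inclusion $\p\subseteq\Fcal=\tube^o$ noted in \ref{preprojective} gives $\Hom{R}{P_i}{T}=0$ for every $T\in\tube$, so $\Hom{R}{M}{T}$ injects into $\Hom{R}{\bigoplus_i P_i}{T}=0$ and $M\in\Fcal$; purity plays no role in this direction.

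For $(2)\Rightarrow(3)$, I start from a pure short exact sequence $0\to K\to B_0\to M\to 0$ with $B_0=\bigoplus_i P_i$. The module $B_0$ is trivially $\p$-filtered and hence Baer by Corollary \ref{baerm}, i.e.\ $B_0\in\Mcal={}^{\perp_1}\Lcal$. For any $Y\in\Lcal$, the hereditary hypothesis gives $\Exti{2}{R}{M}{Y}=0$, so the long exact Ext-sequence of $0\to K\to B_0\to M\to 0$ forces $\Ext{R}{K}{Y}=0$, whence $K\in\Mcal$ as well. Since the tilting cotorsion pair $(\Mcal,\Lcal)$ is complete, $K$ admits a special $\Lcal$-preenvelope $0\to K\to L_K\to B_K\to 0$ with $L_K\in\Lcal$ and $B_K\in\Mcal$; a symmetric Ext-computation using $K,B_K\in\Mcal$ and $R$ hereditary shows $L_K\in\Mcal$, so $L_K\in\Mcal\cap\Lcal=\Add L$ by the standard tilting identity. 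I then form the pushout $P$ of $B_0\hookleftarrow K\hookrightarrow L_K$, obtaining two short exact sequences $0\to L_K\to P\to M\to 0$ and $0\to B_0\to P\to B_K\to 0$. The second exhibits $P$ as an extension of two Baer modules, hence Baer; the first inherits purity from the starting sequence because any $f\colon F\to M$ with $F$ finitely presented first lifts through $B_0$ by purity of the starting sequence, and the composition with $B_0\to P$ is the required lift into $P$.

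Finally, $(3)\Rightarrow(1)$ is short: if $0\to N\to B\to M\to 0$ is pure-exact with $B$ Baer, then $B$ is $\p$-filtered by Corollary \ref{baerm}, and since $\Fcal=\tube^o$ is a torsion-free class containing $\p$, a straightforward transfinite induction along the filtration shows $B\in\Fcal$. Applying $\Hom{R}{-}{T}$ for $T\in\tube$ then gives $\Hom{R}{M}{T}\hookrightarrow\Hom{R}{B}{T}=0$, so $M\in\Fcal$; purity is unused here too. The main obstacle is $(2)\Rightarrow(3)$: the pushout must deliver a kernel that lies in $\Add L$ and not merely in $\Lcal$, and simultaneously preserve purity. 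The first requirement is what forces the double invocation of the hereditary hypothesis together with the standard tilting identity $\Add L=\Mcal\cap\Lcal$; the second is immediate from the universal property of the pushout.
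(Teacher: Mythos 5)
Your steps $(1)\Rightarrow(2)$ and $(2)\Rightarrow(3)$ are sound --- in particular the pushout of $B_0\hookleftarrow K\hookrightarrow L_K$ along the special $\Lcal$-preenvelope of the kernel, together with $\Mcal\cap\Lcal=\Add L$, is essentially the intended construction. But the two implications that close your cycle, $(2)\Rightarrow(1)$ and $(3)\Rightarrow(1)$, rest on a misreading of the torsion pair in \ref{torsionfree}. The torsion-free class is $\Fcal=\tube^o=\{M\mid \Hom{R}{T}{M}=0 \mbox{ for all } T\in\tube\}$: it is the torsion-\emph{free} class of the torsion pair \emph{generated} by $\tube$, so membership is detected by maps \emph{from} regular modules into $M$, not out of $M$. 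Your computation ``$\Hom{R}{M}{T}$ injects into $\Hom{R}{\bigoplus_i P_i}{T}=0$'' tests membership in ${}^o\tube=\tube^\perp$, which is a torsion class (the divisible right modules), not $\Fcal$; and the vanishing $\Hom{R}{P_i}{T}=0$ you invoke is simply false, since $R$ itself is a direct sum of indecomposable preprojectives and $\Hom{R}{R}{T}=T\neq 0$. The claim that ``purity plays no role'' in these directions cannot be right for the same reason: every module is an epimorphic image of a free, hence preprojective, module, so if purity were irrelevant in $(2)\Rightarrow(1)$ the corollary would assert that every module is torsion-free, which fails for any regular or preinjective module.

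The repair is short but it is the actual content of these implications: $\Fcal$ is closed under pure-epimorphic images, because $\Fcal={}^\intercal(\Scal^\intercal)$ by \ref{limScal} and Tor-orthogonal classes are closed under pure quotients (equivalently, $\Fcal={}^\perp(D(W))$ is a cotilting, hence definable, class). With this, $(2)\Rightarrow(1)$ follows from $\bigoplus_i P_i\in\add\p\subseteq\Fcal$ closed under direct sums, and $(3)\Rightarrow(1)$ from $B\in\Mcal\subseteq\varinjlim\Scal=\Fcal$. This is in fact how the paper obtains the statement: it is quoted from \cite{baerml} via Theorem \ref{filtbaer}, Corollary \ref{baerm} and item (1) of the Remark after Theorem \ref{filtbaer} (``$\Fcal$ is exactly the class of all pure epimorphic images of the modules in $\calb$''), where purity of the presentation is precisely what carries the implication back to (1).
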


\bigskip

\section{Representation type}
Again we assume that $R$ is an indecomposable  hereditary artin algebra.
We now   consider   the cotorsion pairs  $(\Ccal,\mathcal C^\perp)$ and $({}^\perp \mathcal D,\mathcal D)$ in $R$-Mod cogenerated by $_R\q$ and generated by $_R\tube$, respectively.
Note that they coincide when $R$ is of tame representation type,  as shown in \cite{RR}. In fact, this characterizes  the tame case.

\begin{thm}\label{tame}
The following statements are equivalent.
 \begin{enumerate}
 \item $(\Ccal,\Dcal)$ is a cotorsion pair.
 \item $W$ is product-complete.
 \item $L$ is endonoetherian.
 \item The torsion pair $(\Ccal, \Qcal)$ splits.
 \item $R$ is of tame representation type.
\end{enumerate}
 \end{thm}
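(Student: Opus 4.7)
The plan is to establish the cycle $(5) \Rightarrow (1) \Leftrightarrow (2) \Leftrightarrow (3) \Rightarrow (4) \Rightarrow (5)$. The three-fold equivalence $(1) \Leftrightarrow (2) \Leftrightarrow (3)$ is formal and comes from Theorem~\ref{resume'}; the implication $(1) \Rightarrow (4)$ is a direct manipulation of torsion pairs; and $(5) \Rightarrow (1)$ uses the Ringel--Reiten structure of $W$ in the tame case. The genuinely hard step is $(4) \Rightarrow (5)$.

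For $(1) \Leftrightarrow (2) \Leftrightarrow (3)$ I would apply Theorem~\ref{resume'} with $\Scal = \add\p$. The ambient hypotheses are met: $R$ is left noetherian, so $\Ccal = \varinjlim \Ccal^{<\omega}$; and $R$ is hereditary, so $\Ccal^{<\omega}$ consists of modules of projective dimension at most one. Since $({}^\perp \Dcal, \Dcal)$ is automatically a cotorsion pair, condition~$(1)$ reduces to the single equation ${}^\perp \Dcal = \Ccal$, which is exactly item~$(1)$ of Theorem~\ref{resume'}. Its equivalence there with $W$ product-complete (item~$(4)$) yields $(1) \Leftrightarrow (2)$, and its equivalence with $T = L$ endonoetherian (item~$(5)$, applicable since $R$ is noetherian) yields $(1) \Leftrightarrow (3)$.

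For $(1) \Rightarrow (4)$ I first observe that $\Qcal \subseteq \Dcal$: since ${}_R\tube \subseteq \Ccal = ({}_R\q)^o$ (there are no non-zero maps from the preinjectives to the regular tubes), every $M \in \Qcal = {}^o\Ccal$ kills ${}_R\tube$ and therefore lies in $\Dcal = {}^o({}_R\tube)$. Under~$(1)$ we have $\Ext{R}{C}{D} = 0$ for all $C \in \Ccal$ and $D \in \Dcal$, and in particular for $D \in \Qcal$; this is precisely the splitting of the torsion pair $(\Ccal, \Qcal)$. For $(5) \Rightarrow (2)$ I would invoke the Ringel--Reiten description in the tame case \cite{RR}: one may choose $W$ as the direct sum of a representative set of Pr\"ufer left modules together with the generic module ${}_RG$, each of which is $\Sigma$-pure-injective and endofinite, so $\Add W$ is closed under direct products and $W$ is product-complete.

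The main obstacle is $(4) \Rightarrow (5)$, the hard direction of Ringel's theorem \cite[3.7--3.9]{R}. The splitting assumption only yields $\Ext{R}{C}{Q} = 0$ for $C \in \Ccal$ and $Q \in \Qcal$, whereas the implication $(4) \Rightarrow (1)$ would require the stronger vanishing against the strictly larger class $\Dcal$, so the formal chain $(1) \Leftrightarrow (2) \Leftrightarrow (3)$ cannot close the loop on its own. To finish, one must produce, for any wild hereditary algebra, an obstruction to the splitting of $(\Ccal, \Qcal)$: I would follow Ringel's strategy, exhibiting an indecomposable module in $\Dcal \setminus \Qcal$ together with a non-split extension from a suitable module in $\Ccal$, using the rich extension structure of regular components in the wild setting. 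This is the single genuinely representation-theoretic input of the theorem and the step I expect to be most delicate.
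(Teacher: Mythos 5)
Your overall architecture --- the cycle $(5)\Rightarrow(1)\Leftrightarrow(2)\Leftrightarrow(3)\Rightarrow(4)\Rightarrow(5)$ --- is sound, and most of it matches the paper: the formal equivalences $(1)\Leftrightarrow(2)\Leftrightarrow(3)$ come from Theorem~\ref{resume'} exactly as you say, and $(4)\Rightarrow(5)$ is deferred to Ringel \cite[3.7--3.9]{R} in both treatments. Your routing of $(1)\Leftrightarrow(4)$ differs slightly: the paper gets both directions at once from Corollary~\ref{splittp} (applied on the left-hand side, with $\Scal'=\Ccal^{<\omega}$, so that $\Fcal'=\Ccal$, $\Mcal'={}^\perp\Dcal$ and $\Gen\Lcal'^{<\omega}=\Qcal$), which rests on the Baer-filtration Theorem~\ref{filtbaer}; you instead prove only the easy implication $(1)\Rightarrow(4)$ directly (your argument $\Qcal\subseteq\Dcal$ and hence $\Ext{R}{C}{Q}=0$ is correct and is essentially the ``only if'' part of \ref{splittp}) and close the loop through $(5)$. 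That is a legitimate alternative, and your diagnosis that $(4)\Rightarrow(5)$ is the genuinely representation-theoretic input is accurate; the paper also records an alternative for $(1)\Rightarrow(5)$ via Kerner's construction of an indecomposable divisible module in $\Ccal\setminus{}^\perp\Dcal$ over a wild algebra.

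The one genuine gap is your argument for $(5)\Rightarrow(2)$. You claim that each summand of the Ringel tilting module (the Pr\"ufer modules and the generic module) is ``$\Sigma$-pure-injective and endofinite, so $\Add W$ is closed under direct products.'' Two problems: first, Pr\"ufer modules over a tame hereditary algebra are \emph{not} endofinite (the generic module is the only infinite-dimensional indecomposable endofinite module); second, and more seriously, even if every summand of an infinite direct sum is $\Sigma$-pure-injective or endofinite, the direct sum need not be product-complete --- over $\Z$ the module $\bigoplus_p \Z/p\Z$ has all summands endofinite, yet $\prod_p\Z/p\Z\notin\Add\bigl(\bigoplus_p\Z/p\Z\bigr)$. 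Product-completeness of $W$ in the tame case is precisely the nontrivial content of $(5)\Rightarrow(2)$ and cannot be read off summand-by-summand; one really needs the structure theory of \cite{RR} (e.g.\ that every divisible left module decomposes into Pr\"ufer and torsion-free divisible parts, equivalently that $(\Ccal,\Dcal)$ is a cotorsion pair). The clean fix is to do what the paper does and cite \cite{RR} for $(5)\Rightarrow(1)$ directly, then deduce $(2)$ from the formal equivalences.
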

 \begin{proof}
 The equivalence of the first three conditions is just Theorem \ref{resume'}. Moreover, (1) and (4) are equivalent by Corollary \ref{splittp}. Finally, the equivalence of (4) and (5) is shown by Ringel \cite[3.7 - 3.9]{R}.
 Alternatively, one can use Kerner's construction  over a wild hereditary algebra
 of an indecomposable divisible module in $\Ccal$ which does not belong to $^\perp \mathcal D$, see \cite[1.7 and p.416]{L1}. This shows that in the wild case $^\perp \mathcal D$ is properly contained in
 $\Ccal$ and therefore proves (1)$\Rightarrow$(5). The converse implication is proven in \cite{RR}.
 \end{proof}
 



 Now let us consider the relationship between the cotorsion pairs $(\Mcal, \Lcal)$ and
 $(\Fcal, \Ecal)$, as defined in 2.1 and 2.4. 
  
\begin{thm}\label{frt}
Assume that $R$ is of tame representation type. The following statements are equivalent.
 \begin{enumerate}
 \item $\Mcal$ is closed under direct limits, that is, it coincides with  $\Fcal$.
 \item $L$ is product-complete.
 \item $W$ is endonoetherian.
 \item $R$ is of finite representation type.
\end{enumerate}
 \end{thm}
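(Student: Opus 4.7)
The plan is to derive $(1)\Leftrightarrow(2)\Leftrightarrow(3)$ as a direct application of Theorem \ref{resume}, to handle $(4)\Rightarrow(2)$ by arranging that $L$ is finitely generated, and to play Theorem \ref{tame} against the explicit Ringel description of $W$ recalled in \ref{divisile} in order to obtain the main implication $(3)\Rightarrow(4)$.

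For $(1)\Leftrightarrow(2)\Leftrightarrow(3)$ I would apply Theorem \ref{resume} with $\Scal=\add\p$. Its hypotheses are automatic here: $R$ is an artin algebra, hence (left and right) noetherian, so $\Ccal = \varinjlim\Ccal^{<\omega}$; and since $R$ is hereditary, $\Ccal^{<\omega}$ consists of modules of projective dimension at most one. For $(4)\Rightarrow(2)$, in finite representation type every indecomposable in $\rfmod R$ is preprojective, so $\Scal = \add\p = \rfmod R$, which is trivially contravariantly finite in $\rfmod R$. Corollary \ref{contra} then allows us to choose $L$ finitely generated; but any finitely generated module over an artin algebra has finite length over the commutative artinian centre (which embeds in its endomorphism ring), hence $L$ is endofinite and in particular product-complete.

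The main step is $(3)\Rightarrow(4)$. Assume $R$ is of tame representation type with $W$ endonoetherian. Theorem \ref{tame} guarantees that tameness makes $W$ product-complete, and combined with endonoetherianness this gives that $W$ is endofinite. Endofiniteness forces $W$ to split as a \emph{finite} direct sum of indecomposable $\Sigma$-pure-injective summands (the lattice of orthogonal summand-idempotents in $\End W$ is bounded by the endolength), so $\Add W$ contains only finitely many isomorphism classes of indecomposable modules. If, however, $R$ were of infinite representation type, Ringel's description recalled in \ref{divisile} would exhibit $W$ as equivalent as a tilting module to $\bigoplus_x W_x[\infty] \oplus {}_RG$, the sum running over representatives of the infinitely many tubes of the regular component; then $\Add W$ would contain infinitely many pairwise non-isomorphic indecomposable Pr\"ufer summands, a contradiction. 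Hence $R$ must be of finite representation type.

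The hard part is exactly this final dichotomy, which rests on two structural inputs beyond the abstract machinery of Sections 2--3: first, that endofiniteness yields a Krull--Remak--Schmidt type decomposition into finitely many indecomposable summands, and second, that over a tame hereditary algebra of infinite representation type Ringel's tilting module genuinely has infinitely many non-isomorphic Pr\"ufer summands. Both are standard, but they constitute the substantive representation-theoretic content distinguishing finite from infinite representation type within the tame case.
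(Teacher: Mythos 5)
Your proposal is correct, but the closing of the cycle is done along a genuinely different route from the paper's. The equivalence of (1)--(3) via Theorem \ref{resume} is exactly what the paper does. For (4) the paper is more direct: over an algebra of finite representation type \emph{every} module is endofinite, so (4)$\Rightarrow$(3) immediately; your detour through Corollary \ref{contra} to make $L$ finitely generated is valid but unnecessary. The substantive divergence is the hard implication. The paper proves (1)$\Rightarrow$(4) by contraposition, exhibiting a concrete module in $\Fcal\setminus\Mcal$ when $R$ is tame representation-infinite, namely the generic module, which is torsion-free but not Baer (this leans on \cite{baerml} and the Baer-module theory of Section 4). You instead prove (3)$\Rightarrow$(4): tameness plus Theorem \ref{tame} gives $W$ product-complete, hence with (3) endofinite, and then the explicit Ringel description of $W$ as the direct sum of all Pr\"ufer modules and the generic module yields infinitely many non-isomorphic indecomposables in $\Add W$ (using that equivalent tilting modules have the same Add-closure, \cite[2.4]{AC1}), contradicting endofiniteness. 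Both arguments ultimately rest on concrete representation-theoretic input in the tame infinite case --- the paper on the generic module failing to be Baer, yours on the infinitude of tubes --- and both are legitimate. One small correction: an endofinite module need not be a \emph{finite} direct sum of indecomposables (an infinite-dimensional vector space over a field is endofinite of endolength one); the correct statement from \cite{C2} is that it decomposes as a direct sum of indecomposable endofinite modules with only finitely many isomorphism classes occurring, which is all your argument actually needs.
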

 \begin{proof}
 The equivalence of the first three conditions is  just Theorem \ref{resume}. Moreover, if $R$ is a ring of finite representation type, then all modules are endofinite.  So, (4) trivially implies (3). Conversely, it is known that $\Mcal$ is properly contained in $\Fcal$ when $R$ is tame of infinite representation type. For example, the generic module is torsion-free but not Baer, see \cite{baerml}. Thus (1) implies (4).
 
 \end{proof}

\begin{cor}\label{wild}
Assume that $R$ is of infinite representation type. Then $(\Fcal,\Gen\tube)$ does not 
split, and $\Mcal$ is properly contained in $\Fcal$.
\end{cor}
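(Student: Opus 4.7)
The strategy is to establish that the torsion pair $(\Fcal,\Gen\tube)$ does not split: by Corollary \ref{splittp}, this is equivalent to $\Mcal\neq\Fcal$, and since the reverse inclusion $\Mcal\subseteq\Fcal$ is automatic (Baer modules are torsion-free by Corollary \ref{corola}; alternatively, by Theorem \ref{filtbaer} the class $\Mcal$ consists of $\add\p$-filtered modules, and $\Fcal$ is closed under such filtrations), both assertions will follow at once. The argument applies uniformly to $R$ tame of infinite representation type and to $R$ wild; for the tame case one could alternatively invoke Theorem \ref{frt}.

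The main step is to exhibit a non-split short exact sequence whose left-hand term is torsion-free and whose right-hand term is torsion. Pick any indecomposable regular module $T\in\tube$. Since $T$ is non-projective, $\tau T$ is again a non-zero module in $\tube$. Let $P$ be a finitely generated projective module admitting a surjection $\pi\colon P\to\tau T$ (for instance, a projective cover in \m); then $P\in\add\p\subseteq\Fcal$ and $\pi$ is a non-zero element of $\Hom{R}{P}{\tau T}$. The Auslander-Reiten formula for the hereditary artin algebra $R$ yields
\[
D\,\Ext{R}{T}{P}\;\cong\;\Hom{R}{P}{\tau T}/\mathcal{I},
\]
where $\mathcal{I}$ denotes the subgroup of morphisms factoring through an injective $R$-module. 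Every finitely generated injective module in \m\ is a direct sum of modules in $\q$, and the Ringel trichotomy gives $\Hom{R}{Q}{T'}=0$ for all $Q\in\q$ and $T'\in\tube$; hence $\pi$ cannot factor through an injective, so $\Ext{R}{T}{P}\neq 0$. This produces a non-split sequence $\exs{P}{}{M}{}{T}$ with $P\in\Fcal$ and $T\in\Gen\tube$, showing that $(\Fcal,\Gen\tube)$ does not split.

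Applying Corollary \ref{splittp} together with the containment $\Mcal\subseteq\Fcal$ yields $\Mcal\subsetneq\Fcal$, as required. The only real subtlety is verifying that the surjection $\pi$ survives in the Auslander-Reiten quotient, which reduces to the standard vanishing of homomorphisms from preinjective to regular finite length modules over a hereditary artin algebra.
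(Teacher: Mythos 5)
There is a genuine error at the heart of your argument: the extension you construct goes in the wrong direction. With the paper's convention, $\Fcal$ is the torsion-free class and $\Gen\tube$ the torsion class, so the canonical sequence of a module $M$ is $0\to t(M)\to M\to M/t(M)\to 0$ with \emph{torsion} submodule and \emph{torsion-free} quotient; the pair splits if and only if $\Ext{R}{F}{G}=0$ for all $F\in\Fcal$ and $G\in\Gen\tube$ --- exactly the criterion invoked in the proof of Corollary \ref{splittp}. What you exhibit is $\Ext{R}{T}{P}\neq 0$ for $T\in\tube$ torsion and $P$ projective torsion-free, i.e.\ a non-split sequence with torsion-free kernel and torsion cokernel. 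Such a sequence is never the canonical sequence of its middle term and carries no information about splitting; it merely records the trivial fact that the regular module $T$ is not projective. (Your reduction of the second assertion to the first via Corollary \ref{splittp} and the automatic inclusion $\Mcal\subseteq\Fcal$ is fine; it is the main step that fails.)

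Worse, the non-vanishing you actually need, $\Ext{R}{F}{G}\neq 0$ with $F$ torsion-free and $G$ torsion, cannot be witnessed by finitely generated modules at all: for $F\in\add\p$ and $G\in\add(\tube\cup\q)$ the Auslander--Reiten formula gives $D\,\Ext{R}{F}{G}\cong\Hom{R}{G}{\tau F}=0$, since $\tau F$ is preprojective (or zero) and there are no nonzero maps from regular or preinjective modules to preprojective ones. So any correct proof must produce an infinite-dimensional torsion-free module that is not Baer, and no ``uniform elementary'' finite-dimensional construction can work. This is precisely why the paper argues by contradiction through large-module machinery: if the pair splits, Corollary \ref{splittp} gives $\Mcal=\Fcal$, Theorem \ref{resume} makes $L$ product-complete and hence $\Sigma$-pure-injective, Lukas's result \cite[6.1.b(ii)]{L1} together with Krull--Remak--Schmidt--Azumaya forces $L$ to be equivalent to an indecomposable, hence endofinite, tilting module, and Corollary \ref{endofin} combined with Theorems \ref{tame} and \ref{frt} then yields finite representation type, a contradiction.
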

\begin{proof}
 By Corollary \ref{splittp} the torsion pair $(\Fcal,\Gen\tube)$   
splits if and only if $\Mcal=\Fcal$. But if this is the case, then we know from Theorem \ref{resume}  that $L$ is   product-complete.  In particular, $L$ is $\Sigma$-pure-injective, so it has a decomposition $L=\bigoplus L_i$ in indecomposable modules with local endomorphism ring. By \cite[6.1.b(ii)]{L1} every $L_i$ has the property that $\Add L\subseteq \Add L_i$. By the Theorem of Krull-Remak-Schmidt-Azumaya it follows that all $L_i$ are isomorphic, that is, 
$L$ is equivalent to an indecomposable product-complete, thus endofinite, tilting module. But 
by Corollary \ref{endofin} this implies that ${}^\perp\Dcal$ coincides with $\Ccal$, and $\Mcal$ coincides with $\Fcal$.  This means that $R$ is of finite representation type by Theorems \ref{tame} and \ref{frt}. 
\end{proof}

 

\bigskip



\end{document}